\newtheorem{notation}[theorem]{Notation}
\keywords{APN function, CCZ-equivalence, biprojective function}
\newcommand{\f}    [1]{{\mathbb{F}_{#1}}}
\newcommand{\trace}[1]{{\mathrm{Tr}       \left({#1}\right)}}
\newcommand{\tr}   [2]{{\mathrm{Tr}_{{#1}}\left({#2}\right)}}
\newcommand{\Gcd}  [2]{{\mathrm{gcd}({#1},{#2})}}
\newcommand{\cubes}[1]{{\mathcal{C}_{#1}}}
\newcommand{\DE}[2]{{\mathsf{D}_{#1}^{{#2}}}}
\newcommand{\ProjectiveLine}[0]{{\mathbb{P}^1}}
\newcommand{\Family}[1]{\mathcal{F}_{#1}}
\newcommand{\K}{\mathbb K}
\newcommand{\N}{\mathbb N}
\newcommand{\Z}{\mathbb Z}
\newcommand{\F}{\mathbb F}
\newcommand{\M}{\mathbb M}
\newcommand{\lms}{\{\!\!\{}
\newcommand{\rms}{\}\!\!\}}
\DeclareMathOperator{\Gal}{Gal}
\DeclareMathOperator{\GL}{GL}
\DeclareMathOperator{\diag}{diag}
\DeclareMathOperator{\Aut}{Aut}
\DeclareMathOperator{\ELM}{ELM}
\DeclareMathOperator{\CCZ}{CCZ}
\DeclareMathOperator{\EL}{EL}
\DeclareMathOperator{\EA}{EA}
\DeclareMathOperator{\im}{Im}
\DeclareMathOperator{\GammaL}{\Gamma L}
\title{Equivalences of biprojective almost perfect nonlinear functions}
\author[1]{Faruk G\"olo\u{g}lu\thanks{Supported by {\sf GA\v{C}R Grant 18-19087S - 301-13/201843}.}}
\author[2]{Lukas K\"olsch}
\affil[1]{%
Department of Algebra, Charles University, Prague, Czech Republic.

\email{farukgologlu@gmail.com}%
}
\affil[2]{%
Department of Mathematics, Unviersity of South Florida, St. Petersburg, FL, United States.

\email{lukas.koelsch.math@gmail.com}%
}
\begin{document}

\maketitle

\begin{abstract}
Two important problems on almost perfect nonlinear (APN) functions are
the enumeration and equivalence problems. In this paper, we solve these
two problems for any biprojective APN function family by introducing a 
group theoretic method for those functions. Roughly half of the known 
APN  families of functions on even dimensions are biprojective. By our 
method, we settle the equivalence problem for all known biprojective 
APN functions. Furthermore, we give a new family of such functions. 
Using our method, we count the number of inequivalent APN functions in 
all known biprojective APN families and show that the new family found 
in this paper gives exponentially many new inequivalent APN functions. 
Quite recently, the Taniguchi family of APN functions was shown to contain
an exponential number of inequivalent APN functions by Kaspers and Zhou 
(J. Cryptol. \textbf{34}(1), 2021) which improved their previous count
(J. Comb. Th. A \textbf{186}, 2022) for the Zhou-Pott family. 
Our group theoretic method substantially simplifies the work required for 
proving those results and provides a generic natural method for every 
family in the large super-class of biprojective APN functions that 
contains these two family along with many others. 
\end{abstract}

\maketitle


\section{Introduction}
Almost perfect nonlinear (APN) functions are cryptographically important 
functions since they give optimal protection against differential attacks 
when used as a building block of a Substitution Permutation Network. 
These functions are combinatorially interesting as there are several 
connections between APN functions and other combinatorial objects like 
difference sets, distance-regular graphs, symmetric association schemes, 
uniformly packed codes, and dimensional dual hyperovals  
(see for instance~\cite{vanDam,graphs,dsets,hyperovals2,hyperovals}).
Important questions in the study of APN functions are to find new infinite 
families of (esp. bijective) APN functions, determine equivalences between 
known functions, enumerating inequivalent APN functions in total or within 
known families. 

There are at least 15 known families of quadratic APN functions 
(see \cite{WQL} for the most up-to-date list) and 
roughly half of them fall into the framework of $(q,r)$-biprojective functions 
introduced in \cite{GIEEE}. In this paper, we will first give a new infinite 
family of $(q,r)$-biprojective APN functions (Theorem \ref{thm_apn}). 
Then we will develop a technique for determining equivalence of two APN 
functions if one of them is $(q,r)$-biprojective (Theorem \ref{thm:projequiv}). 
Using this technique we are able to count the number of inequivalent functions 
in all of the known $(q,r)$-biprojective APN function families 
(Theorem~\ref{thm:counts}). We also show that, apart from one case, all of the 
known $(q,r)$-biprojective APN function families are pairwise inequivalent 
(Theorem \ref{thm:between}). The standard way of showing equivalence is by using 
computers to compare invariants in small dimensions. Apart from that, some 
previous results that show inequivalence between some specific infinite families 
exist (see for instance \cite{WQL,kasperszhouZP,kasperszhou}), usually relying 
on long and technical calculations with linear polynomials tailored towards the 
specific families. In this paper, we will give for the first time a generic 
method for determining equivalence of APN functions for the highly fertile 
super-class of $(q,r)$-biprojective functions that contains roughly half of the 
known families (Theorem \ref{thm:projequiv}). The novelty relies on exploiting 
the existence of a large cyclic subgroup in the automorphism group of the APN 
function. A similar approach has been succesfully employed for the 
algebraically similar object of \emph{semifields}~\cite{GK,golouglu2023counting}. 

Moreover, we are going to show that the new family we present contains an exponential number 
of inequivalent APN functions (Corollary \ref{cor:f4}). 
Among the known infinite families, this is only the second 
family with this property. We also count the number of inequivalent APN functions in all
other $(q,r)$-biprojective families of APN functions, thus completely settling the equivalence 
question within and between all biprojective families (Table \ref{table_biproj}). 
Our group theoretic framework allows us to substantially  
simplify the problem, without which the treatment of the more complex functions would 
not be possible. Recently, Kaspers and Zhou \cite{kasperszhou} showed 
that the Taniguchi family of APN functions contains an exponential number of inequivalent 
functions using an intricate analysis of linearized polynomials. With a similar method, 
they computed the number of inequivalent APN functions in Zhou-Pott family \cite{kasperszhouZP}.
With our group theoretic method, which works for any $(q,r)$-biprojective family, 
we can, in particular, recover those results in a more natural way and
provide similar results for all known biprojective APN families of Table \ref{table_biproj} that are out of reach of the method of Kaspers and Zhou.
Some of our ideas are inspired from the works of Dempwolff and Yoshiara 
\cite{DempwolffPower,YoshiaraPower}, which also employ group theory, but 
only cover the equivalence question of monomials, which have a much simpler structure 
than the biprojective functions we consider here. 

{
In Section \ref{sec_pre}, we recall the basic definitions including projective
and biprojective polynomials. Then in Section \ref{sec_apn}, we will prove that a
new family of biprojective APN functions exists. In Section \ref{seq_equiv}, we 
introduce our technique. 
In Sections \ref{sec:inside} and \ref{sec_between}, we prove our 
equivalence and enumerations results. The main results here are Theorem~\ref{thm:counts} 
and \ref{thm:between}. 
A special case of Theorem~\ref{thm:counts} is treated In Appendix \ref{sec:carlet}
where we concentrate on the $(1,q)$-biprojective APN family
introduced by Carlet, and provide equivalence and enumeration results specific to
that family. Finally, in Appendix \ref{sec_walsh}, we compute the Walsh spectrum of the new family.
}

\section{Preliminaries} \label{sec_pre}

{An  \textbf{$(n,n)$-vectorial Boolean function} is a map from the $n$-dimensional 
$\f{2}$-vector space $\F_{2}^n$ to itself. In this
paper we are only interested in the case where $n=2m$ is even. In that case, we can identify $\F_{2}^n$ with $\F_{2^m}\times \F_{2^m}$, which is the setting that we will be using throughout the paper.} 
An $(n,n)$-vectorial Boolean function $F$ is said to be \textbf{almost perfect
nonlinear (APN)} if $F(x) + F(x + a) = b$ has zero or two solutions for all
nonzero $a \in \F_2^n$ and all $b \in \F_2^n$.  Note that these functions are optimal in characteristic two since solutions
to $F(x) + F(x + a) = b$ come in pairs, i.e., $F(x_0+a) + F(x_0) = F((x_0+a)+a) + F(x_0+a)$. Recall further that the absolute trace map on a finite field $\f{2^m}$ is defined as
$\trace{x} = \sum_{i = 0}^{m-1} x^{2^i}$.

\subsection{Projective and biprojective polynomials}
Projective and biprojective APN functions were introduced by the first author 
in~\cite{GIEEE}. We recall the definitions in this section, as well as fix some 
notation we will use throughout the paper.

Let $\M$ be the finite field with $2^m$ elements. Let $F$ be
\begin{equation*}
F(x,y) = \left(f(x,y),g(x,y)\right),
\end{equation*}
with $q = 2^k$, $r = 2^l$, $k,l \ge 0$, and 
\begin{align*}
f(x,y) &= a_0 x^{q+1} + b_0 x^q y + c_0 x y^q + d_0 y^{q+1},\\
g(x,y) &= a_1 x^{r+1} + b_1 x^r y + c_1 x y^r + d_1 y^{r+1},
\end{align*}
where $a_i,b_i,c_i,d_i \in \M$.
We will call $f(x,y)$, $g(x,y)$ {\bf $q$-biprojective polynomials} and $F(x,y)$ a {\bf $(q,r)$-biprojective polynomial pair}.
We are going to use the shorthand notation
\begin{align*}
f(x,y) 	&= a_0 x^{q+1} + b_0 x^q y + c_0 x y^q + d_0 y^{q+1}\\
		&= (a_0,b_0,c_0,d_0)_q,\\
g(x,y)	&= a_1 x^{r+1} + b_1 x^r y + c_1 x y^r + d_1 y^{r+1}\\
		&= (a_1,b_1,c_1,d_1)_r.
\end  {align*}

The univariate polynomial $f(x,1)$ is called a \textbf{$q$-projective polynomial}. 
The careful study of zeroes of projective polynomials was done by Bluher \cite{bluher}.
Let $\ProjectiveLine(\M) = \M \cup \{\infty\}$ denote the \textbf{projective line} over 
the finite field $\M$, i.e., the ratios $v/w$ where 
$(v,w) \in \M \times \M \setminus \{(0,0)\}$ where $v/0$ for all $v \in \M^\times$ is
defined to be the symbol $\infty$.

Define
\begin{align*}
 \DE{f}{\infty}(x,y) &= a_0 x^q + a_0 x + c_0 y^q + b_0 y,\\
 \DE{g}{\infty}(x,y) &= a_1 x^r + a_1 x + c_1 y^r + b_1 y,
\end{align*}
and for $u \in \ProjectiveLine(\M) \setminus \{\infty\}$,
\begin{align*}
 \DE{f}{u}(x,y) &= (a_0 u + b_0) x^q + (a_0 u^q + c_0) x 
	 			+ (c_0 u + d_0) y^q + (b_0 u^q + d_0) y,
\end{align*}
and similarly
\begin{align*}
 \DE{g}{u}(x,y) &= (a_1 u + b_1) x^r + (a_1 u^r + c_1) x 
	 			+ (c_1 u + d_1) y^r + (b_1 u^r + d_1) y.
\end{align*} 

We will view $F$ as a vectorial Boolean function $F : \M \times \M \to \M \times \M$.
We will not make any distinction between polynomials and functions and call
a $(q,r)$-biprojective polynomial pair $F(x,y)$ a $(q,r)$-biprojective function $F$.
The following straightforward lemma that simplifies checking for the APN condition was proved in \cite{GIEEE}. 

\begin{lemma}\label{lem_APN}
Let $F(x,y) = (f(x,y),g(x,y))$ be a $(q,r)$-biprojective polynomial pair.
Then $F$ is APN on $\M \times \M$ if and only if 
$
\DE{f}{u}(x,y) = 0 = \DE{g}{u}(x,y)
$
has exactly two solutions for each $u \in \ProjectiveLine(\M)$.
\end{lemma}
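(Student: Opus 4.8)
The plan is to combine the classical kernel criterion for quadratic functions with a homogeneity (scaling) argument. Every monomial appearing in $f$ and $g$ has the shape $x\,x^{2^j}$, $x\,y^{2^j}$, $y\,x^{2^j}$ or $y\,y^{2^j}$, so for any $(s,t)$ the first-order derivative $D_{(s,t)}F(x,y) := F(x+s,y+t)+F(x,y)$ is an affine map in $(x,y)$ over $\F{2}$; denote its linear part by $L_{(s,t)}$. For an affine map, $D_{(s,t)}F = b$ has either no solution or exactly $|\ker L_{(s,t)}|$ solutions, and when $(s,t)\neq(0,0)$ both $(0,0)$ and $(s,t)$ lie in $\ker L_{(s,t)}$, so $|\ker L_{(s,t)}|\geq 2$. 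Hence $F$ is APN if and only if $|\ker L_{(s,t)}| = 2$ for every nonzero $(s,t)\in\M\times\M$; this is the ``straightforward'' reduction.

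Next I would write $L_{(s,t)}$ out. Expanding $(x+s)^{q+1}=x^{q+1}+sx^q+s^qx+s^{q+1}$ and the analogues for the mixed terms, the linear part of $D_{(s,t)}f$ is
\[
(a_0 s + b_0 t)x^q + (a_0 s^q + c_0 t^q)x + (c_0 s + d_0 t)y^q + (b_0 s^q + d_0 t^q)y,
\]
and the linear part of $D_{(s,t)}g$ is the same expression with $(a_0,b_0,c_0,d_0,q)$ replaced by $(a_1,b_1,c_1,d_1,r)$. Comparing these with the definitions of $\DE{f}{u}$ and $\DE{g}{u}$ suggests introducing $u=s/t\in\M$ when $t\neq 0$, and $u=\infty$ when $t=0$.

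The key step is then a scaling identity. Fix a nonzero $(s,t)$ with $t\neq 0$ and put $u=s/t$. Substituting $x=tX$, $y=tY$ (so $x^q=t^qX^q$ and $x^r=t^rX^r$) a direct check yields
\[
L_{(s,t)}(tX,tY) = \bigl(t^{q+1}\,\DE{f}{u}(X,Y),\ t^{r+1}\,\DE{g}{u}(X,Y)\bigr).
\]
Since $t\neq 0$, the map $(X,Y)\mapsto(tX,tY)$ is a bijection of $\M\times\M$, so $|\ker L_{(s,t)}|$ equals the number of common zeroes of $\DE{f}{u}$ and $\DE{g}{u}$. When $t=0$ (so $s\neq0$), the same computation with $x=sX$, $y=sY$ gives $L_{(s,0)}(sX,sY) = \bigl(s^{q+1}\DE{f}{\infty}(X,Y),\,s^{r+1}\DE{g}{\infty}(X,Y)\bigr)$, with the same conclusion for $u=\infty$. (Consistency check: $(u,1)$, respectively $(1,0)$, is always a common zero, reflecting the fact that $(s,t)\in\ker L_{(s,t)}$.)

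Finally, as $(s,t)$ ranges over the nonzero pairs of $\M\times\M$ the associated $u$ ranges over all of $\ProjectiveLine(\M)$, and conversely each $u\in\ProjectiveLine(\M)$ is realized by some nonzero $(s,t)$, e.g.\ $(u,1)$ for $u\in\M$ and $(1,0)$ for $u=\infty$. Together with the reduction of the first paragraph and the scaling identity, this gives that $F$ is APN if and only if $\DE{f}{u}(x,y)=0=\DE{g}{u}(x,y)$ has exactly two solutions for each $u\in\ProjectiveLine(\M)$, as claimed. The only real pitfall is bookkeeping: one must carry the two distinct Frobenius exponents $q$ and $r$ correctly through the substitution in the two components, but no genuine difficulty arises beyond that.
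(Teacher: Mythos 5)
Your proof is correct: the reduction of APN-ness to $|\ker L_{(s,t)}|=2$ for quadratic maps, the explicit linear part, and the substitution $(x,y)=(tX,tY)$ (resp.\ $(sX,sY)$) producing $t^{q+1}\DE{f}{u}$ and $t^{r+1}\DE{g}{u}$ all check out against the definitions of $\DE{f}{u}$ and $\DE{g}{u}$. The paper itself gives no proof (it cites \cite{GIEEE} and calls the lemma straightforward), but your homogenization argument is exactly the intended one — the operators $\DE{f}{u}$ are defined precisely so that this scaling identity holds.
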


Table \ref{table_biproj} lists all known biprojective APN families. 
 We denote the families of Gold \cite{Gold}, Carlet \cite{Carlet}, Taniguchi \cite{Taniguchi}, Zhou-Pott \cite{ZP13} functions and the two families contained in \cite{GIEEE} by $\mathcal{G}$, $\mathcal{C}$, $\mathcal{T}$, $\mathcal{ZP}$, $\mathcal{F}_1$, $\mathcal{F}_2$, respectively. The new family we present in Section~\ref{sec_apn} is denoted by $\mathcal{F}_4$ (family $\mathcal{F}_3$ in~\cite{GIEEE} is another family of biprojective APN functions that only contains sporadic examples). 
We want to note that the first component of the Taniguchi functions is often also written (in our notation) as $(1,0,c,d)_q$. However, it is easy to verify that all these function with $c \neq 0$ are equivalent to the $c=1$ case, and the $c=0$ case is a Zhou-Pott function. The \textbf{Count} column refers to the number of ($\CCZ$-)inequivalent functions in each family. 
For the precise definitions of equivalence, we refer to Section~\ref{seq_equiv}.

\begin{table}[ht] 
\noindent\begin{center} 
{\scriptsize
\begin{tabular}{|c|c|c|c|c|} 
\hline 
\textbf{Family} & \textbf{Function} &  \textbf{Notes} & \textbf{Count} & \textbf{Proved in}\\
\hline

$\mathcal{G}$ & $X^{q+1}$ &  
\begin{tabular}{@{}c@{}} $q=2^k$, $\Gcd{k}{m}=1$ \end{tabular} & 
  &    \\ 
\cdashline{2-3}[.8pt/0.5pt]
                & $((0,1,1,0)_q,(1,0,1,1)_q)$ &  $m$ odd. &$\frac{\varphi(2m)}{2}$ &\cite{Gold} \\
\cdashline{2-3}[.8pt/0.5pt]
                & $((1,0,b,a)_q,(0,1,1,b+1)_q)$ &  $m$ even, $\tr{\M/\f{2}} a=1$, 
								$b = \sum_{i = 0}^{k-1} a^{2^i}$. & & \\
\hline 

$\mathcal{C}$   & $(xy,(1,b,c,d)_{q})$ & 
\begin{tabular}{@{}c@{}}  $q=2^k$, $0 < k < m$,\\ $\Gcd{k}{m}=1$,\\ $x^{q+1}+bx^q+cx+d \ne 0$ for $x \in \M$. \end{tabular} & \begin{tabular}{@{}c@{}}$\frac{\varphi(m)}{2} $\\ Thm.~\ref{thm:carlet_1},~\ref{thm:counts} \end{tabular}
& \cite{Carlet}\\ 
\hline 

$\mathcal{T}$   & $((1,0,1,d)_q,(0,0,1,0)_{q^2})$ & 
\begin{tabular}{@{}c@{}}  $q=2^k$, $0 < k < m$,\\ $\Gcd{k}{m}=1$,\\ $x^{q+1}+x+d \ne 0$ for $x \in \M$. \end{tabular} & \begin{tabular}{@{}c@{}}$\geq \frac{\varphi(m)}{2}\lceil \frac{2^m+1}{3m}\rceil $\\ \cite{kasperszhou} \end{tabular}
& \cite{Taniguchi}\\ 
\hline 

$\mathcal{ZP}$  & $((1,0,0,d)_q,(0,0,1,0)_r)$ & 
\begin{tabular}{@{}c@{}} $q=2^k,r=2^j$, $0 < j,k < m$, $m$ even \\ $\Gcd{k}{m}=1$,\\
$d \ne a^{q+1}(b^q+b)^{1-r}$ for $a,b \in \M$. \end{tabular} & \begin{tabular}{@{}c@{}}$\frac{\varphi(m)}{2}\lfloor \frac{m}{4}+1\rfloor$\\ \cite{kasperszhouZP} \end{tabular}
& \cite{ZP13}\\ 
\hline 

$\Family{1}$  & $((1,0,1,1)_q,(1,1,0,1)_{q^2})$ & 
\begin{tabular}{@{}c@{}} $q=2^k, \quad 0 < k < m$,\\ $\Gcd{3k}{m}=1.$ \end{tabular} & \begin{tabular}{@{}c@{}}$\frac{\varphi(m)}{2} $\\ Thm.~\ref{thm:counts}\end{tabular}
& \cite{GIEEE}\\ 
\hline 

$\Family{2}$  & $((1,0,1,1)_q,(0,1,1,0)_{q^3})$ & 
\begin{tabular}{@{}c@{}} $q=2^k, \quad 0 < k < m$,\\ $\Gcd{3k}{m}=1$, $m$ odd. \end{tabular} & \begin{tabular}{@{}c@{}}$\frac{\varphi(m)}{2} $\\ Thm.~\ref{thm:counts}\end{tabular}
& \cite{GIEEE}\\ 
\hline 

$\Family{4}$  & $((1,0,0,B)_q,(0,1,\frac{a}{B},0)_r)$                    & 
\begin{tabular}{@{}c@{}} $q=2^k,\quad r=2^{k+m/2}, \quad 0 < k < m$,\\ $m\equiv 2 \pmod 4$, \quad $\Gcd{k}{m}=1,$ \\ 
$a \in \K^\times,\quad B \in \M^\times \setminus \cubes{\M}, \quad B^{q+r} \ne a^{q+1}.$\end{tabular} & \begin{tabular}{@{}c@{}}$\geq \frac{\varphi(m)}{2m}(2^\frac{m}{2}-2) $\\ Cor.~\ref{cor:f4}\end{tabular}
 & Thm. \ref{thm_apn} \\ 
\hline 

\end{tabular} 
}
\end{center}
\caption{Known infinite families of biprojective APN functions on $\M \times \M$} \label{table_biproj}
\end{table}


\section{The new APN family} \label{sec_apn}
\subsection{Family $\Family{4}$}
We will now present a new family of $(q,r)$-biprojective APN functions on 
$\M \times \M$ where $q=2^k$ with $\Gcd{k}{m}=1$ and $r=2^{k+m/2}$. We will use the following notation in this section: 
\begin{notation} 
\begin{itemize}
\item Let $\F = \f{2^n}$ with $n=2m$ even.

\item Let $\M = \f{2^m}$ be the finite field with $2^m$ elements where $m$ is even and $m/2$ is odd.

\item Let $\K = \f{2^{m/2}}$ be the finite field with $Q = 2^{m/2}$ elements.

\item Let $q = 2^k$ and $r = 2^{k+m/2}$ with $1 \le k \le m-1$.

\item Let $\cubes{\M} = (\M^\times)^3$ be the set of non-zero cubes of $\M$.

\item Clearly, $(\M^\times)^{Q+1} = \K^\times \subset \cubes{\M}$ (note $3|Q+1$ since $m/2$ is odd).

\item Let the group of $(Q+1)^{\textrm{-st}}$ roots of unity be denoted by $(\M^\times)^{Q-1}$. It is easy to see
that $(\M^\times)^{Q-1} \cap \K = \{1\}$ and any $x \in \M^\times$ can be written uniquely as $x = cg$ where $c \in \K^\times$
and $g \in (\M^\times)^{Q-1}$.
\end{itemize}
\end{notation}

We start with some simple lemmas.
\begin{lemma} \label{lem:basics}
Let $\Gcd{k}{m}=1$ and $r=qQ$. Then
\begin{itemize}
\item $\Gcd{q+1}{2^m-1} = \Gcd{r-1}{2^m-1} = \Gcd{q^2-1}{2^m-1} = 3$,
\item $\Gcd{q-1}{2^m-1} = \Gcd{r+1}{2^m-1} = \Gcd{q+1}{Q-1} = 1$.
\end{itemize}
\end{lemma}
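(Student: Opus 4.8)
The plan is to derive everything from the classical identity $\Gcd{2^a-1}{2^b-1}=2^{\Gcd{a}{b}}-1$, combined with the two divisibilities $q+1 \mid q^2-1 = 2^{2k}-1$ and $r+1 \mid r^2-1 = 2^{2(k+m/2)}-1$ (recall $r=qQ=2^{k+m/2}$). So the first step is to pin down the handful of gcds of \emph{exponents} that occur. Since $\Gcd{k}{m}=1$ and $m$ is even, $k$ is odd; writing $m=2^a m'$ with $m'$ odd, the integer $2k$ contains exactly one factor of $2$ while $\Gcd{2k}{m'}=\Gcd{k}{m'}=1$, so $\Gcd{2k}{m}=2$. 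Because $m/2$ is odd we likewise get $\Gcd{2k}{m/2}=\Gcd{k}{m/2}=1$. Finally, if $d=\Gcd{k+m/2}{m}$ then $d\mid 2(k+m/2)-m=2k$, so $d\mid\Gcd{2k}{m}=2$, while $d$ is even because both $k+m/2$ (odd $+$ odd) and $m$ are even; hence $\Gcd{k+m/2}{m}=2$.

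The second step records the residues mod $3$ that decide, for a quantity already known to divide $3$, whether it is $1$ or $3$. Since $m$ is even, $3\mid 2^m-1$. Since $k$ is odd, $2^k\equiv-1\pmod 3$, so $3\mid q+1$. Since $k+m/2$ is even, $2^{k+m/2}\equiv 1\pmod 3$, so $3\mid r-1$ but $3\nmid r+1$.

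The third step simply assembles the six equalities. Directly, $\Gcd{q-1}{2^m-1}=2^{\Gcd{k}{m}}-1=1$, $\Gcd{q^2-1}{2^m-1}=2^{\Gcd{2k}{m}}-1=3$, and $\Gcd{r-1}{2^m-1}=2^{\Gcd{k+m/2}{m}}-1=3$. For $\Gcd{q+1}{2^m-1}$: it divides $\Gcd{q^2-1}{2^m-1}=3$ and is a multiple of $3$ by Step~2, hence equals $3$. For $\Gcd{r+1}{2^m-1}$: it divides $\Gcd{2^{2(k+m/2)}-1}{2^m-1}=2^{\Gcd{2k}{m}}-1=3$ and is coprime to $3$ by Step~2, hence equals $1$. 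Finally $\Gcd{q+1}{Q-1}$ divides $\Gcd{2^{2k}-1}{2^{m/2}-1}=2^{\Gcd{2k}{m/2}}-1=1$, hence equals $1$.

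There is no genuine obstacle here; the only place demanding care is the parity bookkeeping of Step~1, namely correctly tracking $2$-adic valuations to extract $\Gcd{2k}{m}=2$, $\Gcd{2k}{m/2}=1$ and $\Gcd{k+m/2}{m}=2$ from the standing hypotheses that $m$ is even, $m/2$ is odd, and $\Gcd{k}{m}=1$.
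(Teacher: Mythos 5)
Your proof is correct and complete. The load-bearing computation is the same as the paper's: establishing $\Gcd{k+m/2}{m}=2$ from the standing hypotheses that $m$ is even, $m/2$ is odd, and $\Gcd{k}{m}=1$ (the paper argues that any odd common divisor of $k+m/2$ and $m$ would divide $m/2$ and hence $k$; you argue that the gcd divides $2(k+m/2)-m=2k$ and is even — both are fine). Where you genuinely diverge is in handling the gcds involving $q+1$ and $r+1$: the paper disposes of $\Gcd{r+1}{2^m-1}=1$ by citing the standard parity criterion for $\Gcd{2^a+1}{2^b-1}$ (namely that it is $1$ when $b/\Gcd{a}{b}$ is odd), and declares the remaining four equalities obvious or well-known. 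You instead sandwich each such gcd between a divisor of $3$ (via $q+1\mid q^2-1$ and $r+1\mid r^2-1$ together with $\Gcd{2^a-1}{2^b-1}=2^{\Gcd{a}{b}}-1$) and an explicit mod-$3$ residue computation. This buys you a self-contained argument that needs only one classical identity rather than two, and it makes explicit the four claims the paper leaves to the reader; the cost is a bit more bookkeeping. Both routes are sound.
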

\begin{proof}
We only prove $\Gcd{r-1}{2^m-1}=3$ and $\Gcd{r+1}{2^m-1} = 1$, the other statements are obvious/well-known.

Since $\Gcd{k}{m}=1$ and $m$ is even, we know that $k$ is odd. Then $\Gcd{k+m/2}{m}=2$. Indeed, assume $d$ is an odd divisor of $m$ and $k+m/2$, then it is also a divisor of $m/2$ and thus also of $k$, so $d=1$. Then $\Gcd{r-1}{2^m-1} = 2^\Gcd{k+m/2}{m}-1=3$ and $\Gcd{r+1}{2^m-1} = 1$ since $m/\Gcd{k+m/2}{m}$ is odd. 
\end{proof}
We are now ready to prove the APN property of the new family.
\begin{theorem} \label{thm_apn}
Let $B \in \M^\times \setminus \cubes{\M}$ and $a \in \K^\times$ be such that $B^{q+r} \ne a^{q+1}$, and
let $F : \M \times \M \to \M \times \M$ be defined as
\[
F(x,y) = ((1,0,0,B)_q,(0,1,a/B,0)_r),
\]
where $q=2^k$ {with $m \equiv 2 \pmod 4$}, $\Gcd{k}{m}=1$ and $r=qQ$. 
Then $F$ is APN.
\end{theorem}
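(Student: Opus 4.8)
The approach is to apply Lemma~\ref{lem_APN} directly. Since both $\DE{f}{u}$ and $\DE{g}{u}$ are $\f{2}$-linearized polynomials, their common zero set is an $\f{2}$-subspace of $\M \times \M$ containing $(0,0)$, so the requirement that it have exactly two elements is equivalent to there being a \emph{unique} nonzero common zero; this is what I would verify for every $u \in \ProjectiveLine(\M)$. Substituting the coefficients $(a_0,b_0,c_0,d_0)_q = (1,0,0,B)_q$ and $(a_1,b_1,c_1,d_1)_r = (0,1,a/B,0)_r$ into the definitions, the system $\DE{f}{u} = 0 = \DE{g}{u}$ reads, for $u \in \ProjectiveLine(\M) \setminus \{0,\infty\}$,
\begin{align*}
u x^q + u^q x + B(y^q + y) &= 0,\\
x^r + (a/B)x + (a/B)\, u\, y^r + u^r y &= 0,
\end{align*}
while for $u = 0$ it is $B(y^q+y) = 0$ and $x^r + (a/B)x = 0$, and for $u = \infty$ it is $x^q + x = 0$ and $(a/B)y^r + y = 0$. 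Throughout I would use Lemma~\ref{lem:basics} together with the observation that, since $\K^\times = (\M^\times)^{Q+1} \subseteq \cubes{\M}$ while $B \notin \cubes{\M}$, both $a/B$ and $B/a$ are non-cubes, hence are not $(r-1)$-th powers because $(\M^\times)^{r-1} = (\M^\times)^{\Gcd{r-1}{2^m-1}} = \cubes{\M}$.

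For $u = 0$ the two equations decouple: $B(y^q+y) = 0$ forces $y \in \f{2}$ (using $\Gcd{k}{m}=1$), and $x^r + (a/B)x = 0$ forces $x = 0$ since $x^{r-1} = a/B$ has no solution; thus the solution set is $\{(0,0),(0,1)\}$, of size two, and the case $u = \infty$ is symmetric with solution set $\{(0,0),(1,0)\}$. This leaves the main case $u \in \M^\times$. Here I would first rule out degenerate solutions: if $y = 0$, then $u x^q = u^q x$ gives $x^{q-1} = u^{q-1}$, hence $x = u$ (as $\Gcd{q-1}{2^m-1}=1$), and the second equation then yields the impossible $u^{r-1} = a/B$; the case $x = 0$ is analogous. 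So any nonzero solution has $x,y \in \M^\times$.

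For the remaining analysis I would normalize by writing $x = u\xi$, which turns the system into
\begin{align*}
u^{q+1}(\xi^q + \xi) &= B(y^q + y),\\
u^{r-1}(\xi^r + y) &= (a/B)(\xi + y^r),
\end{align*}
and then eliminate one variable between these two $\f{2}$-linearized relations --- for instance by applying the Frobenius $z \mapsto z^Q$ to the first relation, using $(q+1)Q = r+Q$ and $z^{rQ}=z^q$ for $z \in \M$, and combining the result with the second relation --- thereby reducing the problem to showing that a single linearized equation has a unique nonzero root. I expect this elimination-and-uniqueness step to be the main obstacle, and it is where the three remaining hypotheses enter: $\Gcd{k}{m}=1$ and $m \equiv 2 \pmod 4$ fix the relevant greatest common divisors through Lemma~\ref{lem:basics} (so that the kernels of $z \mapsto z^q+z$ and $z \mapsto z^r+z$ on $\M$ are $\f{2}$ and $\f{4}$, and $3 \mid Q+1$); the non-cube condition $B \notin \cubes{\M}$ again excludes the spurious solutions produced by $(r-1)$-th power equations; and the norm condition $B^{q+r} \ne a^{q+1}$ --- equivalently $N_{\M/\K}(B)^q \ne a^{q+1}$, an identity between elements of $\K$ --- rules out the single degenerate configuration in which the two relations become proportional and contribute extra zeros. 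The delicate part is organizing the case distinction (according to which of the expressions in $u,\xi,y$ appearing in the elimination vanish) so that in every branch exactly one hypothesis forces either a contradiction or a unique value of $\xi$, whence $y$ and then $x = u\xi$ are determined uniquely; Lemma~\ref{lem_APN} then gives the claim.
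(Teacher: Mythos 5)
Your setup is sound and matches the paper's opening moves: invoking Lemma~\ref{lem_APN}, disposing of $u=0$ and $u=\infty$ via $\Gcd{r-1}{2^m-1}=3$ and the fact that $a/B$ and $B/a$ are non-cubes, excluding solutions with $xy=0$, and normalizing $x=u\xi$ to reach the system $u^{q+1}(\xi^q+\xi)=B(y^q+y)$, $u^{r-1}(\xi^r+y)=(a/B)(\xi+y^r)$. Two smaller omissions already appear there: you must still check that $\xi,y\in\f{2}$ forces $(\xi,y)\in\{(0,0),(1,1)\}$ (this uses that $u^{q+1}=B$ is impossible for $B\notin\cubes{\M}$), and before forming any ratios you must exclude $\xi=y^r$ and $y=\xi^r$, which the paper does by pushing such a solution into $\f{4}$ and again invoking $B\notin\cubes{\M}$.

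The genuine gap is the entire core of the argument, which you explicitly defer (``I expect this elimination-and-uniqueness step to be the main obstacle''). The paper does not reduce to a single linearized equation with a unique nonzero root, and it is doubtful that a Frobenius elimination would work: you would obtain a high-degree linearized polynomial with no tool to bound its kernel. The actual mechanism is multiplicative. The paper writes $\phi_q=(y^q+y)/(\xi^q+\xi)=1/(cg)$ and $\phi_r=(\xi+y^r)/(\xi^r+y)=dh$ using the unique factorization $\M^\times=\K^\times\cdot(\M^\times)^{Q-1}$, multiplies the two relations to get $\phi_q\phi_r=u^{q+r}/a\in\K$ and hence $g=h$, and uses $q^2\equiv r^2$ to derive $c^{q-1}d^{q+1}=B^{q+r}/a^{q+1}\neq 1$ --- this is where the hypothesis $B^{q+r}\neq a^{q+1}$ enters, as a constraint on the $\K^\times$-components $c,d$, not as the exclusion of a ``proportionality'' degeneracy as you guess. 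The contradiction then comes from substituting $z=y+\xi^Q$, extracting two linearized identities, and splitting on $c+d$: if $c+d=0$ one is forced to $c=d=1$, contradicting $c^{q-1}d^{q+1}\neq 1$; if $c+d\neq 0$ one finds $z^{r-1}=(c+d)g/(cd+1)$, hence $g\in\cubes{\M}$, and then $u^{q+1}=B/(cg)$ equates a cube with a non-cube. None of this structure --- the norm-one decomposition, the identity forced by $q^2\equiv r^2$, the auxiliary variable $z$, the two-case analysis --- is present or foreseeable from your sketch. As written, the proposal handles the routine cases and describes the difficulty of the theorem rather than resolving it.
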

\begin{proof}
We are going to use Lemma \ref{lem_APN}. 
First
\begin{align*}
\DE{f}{0}(x,y) &= B(y^q + y) = 0, \quad \textrm{and},\\
\DE{g}{0}(x,y) &= x^r + \frac{ax}{B} = 0,
\end{align*}
implies $y \in \f{2}$ and $x = 0$ are the only common solutions since
$\Gcd{r-1}{2^m-1} = 3$ and $a/B \not\in \cubes{\M}$. Similarly,
\begin{align*}
\DE{f}{\infty}(x,y) &= y^q + y = 0, \quad \textrm{and},\\
\DE{g}{\infty}(x,y) &= x + \frac{ax^r}{B} = 0,
\end{align*}
have the same common solutions. Now for $u \in \M^\times$,
\begin{align*}
\DE{f}{u}(x,y) &= ux^q + u^qx + B(y^q + y) = 0, \quad \textrm{and},\\
\DE{g}{u}(x,y) &= x^r + \frac{a}{B}x + \frac{a}{B} u y^r + u^r y = 0.
\end{align*}
Now,
\begin{align*}
\DE{f}{u}(ux,y) &= u^{q+1}(x^q + x) + B(y^q + y) = 0, \quad \textrm{and},\\
\DE{g}{u}(ux,y) &= u^r (x^r + y) + \frac{a}{B} u (x + y^r) = 0.
\end{align*}
When $x,y \in \f{2}$, the only solutions are $(x,y) \in \{ (0,0),(1,1) \}$
since $u^r + au/B = 0$ implies $a/B$ is a cube. 
We will proceed to show that these are the only solutions for $x,y \in \M$. 
Now we can assume $x,y \in \M \setminus \f{2}$, since $x \in \f{2}$ implies
$y \in \f{2}$ and vice versa for $\DE{f}{u}(ux,y) = 0$. Furthermore, 
$x = y^r$ implies $y^{r^2} + y = 0$ {and thus $x = y \in \f{4} \setminus \f{2}$ by Lemma~\ref{lem:basics}. Then}
 $x^q + x = y^q + y = 1$, and in turn $u^{q+1} = B$, which is impossible since
$\Gcd{q+1}{2^m-1}=3$ and $B \not\in \cubes{\M}$. The same argument shows
$y \ne x^r$ and we can concentrate on
\begin{align}
u^{q+1} &= \frac{B(y^q + y)}{x^q + x}, \textrm{ and}, \label{eq1}\\
u^{r-1} &= \frac{a(x + y^r)}{B(x^r+y)},               \label{eq2}
\end{align}
for $x,y \in \M \setminus \f{2}$ with $x^r \ne y$ and $x \ne y^r$.
Now assume \eqref{eq1} and \eqref{eq2} holds for such $x,y \in \M \setminus \f{2}$, 
and let
\begin{align}
\phi_q(x,y) &= \frac{y^q + y}{x^q + x} = \frac{1}{cg}, \textrm{ and}, \label{eq3}\\
\phi_r(x,y) &= \frac{x + y^r}{x^r+y}   = dh,                          \label{eq4}
\end{align}
for some $c,d \in \K^\times$ and $g,h \in (\M^\times)^{Q-1}$. Multiplying \eqref{eq1} and
\eqref{eq2} we get
\[
\phi_q(x,y)\phi_r(x,y) = \frac{u^{q+r}}{a} \in \K, 
\]
and therefore $g = h$. Noting that $q^2\equiv q^2Q^2\equiv r^2 \pmod{2^m-1}$, we get
\[
1 = \frac{u^{(q+1)(q-1)}}{u^{(r-1)(r+1)}} 
  = \frac{B^{q+r}\phi_q(x,y)^{q-1}}{a^{q+1}\phi_r(x,y)^{r+1}},
\]
which implies 
\begin{equation}\label{eq5}
(cg)^{q-1}(dg)^{r+1} = c^{q-1}d^{q+1}g^{q+r} = c^{q-1}d^{q+1} = \frac{B^{q+r}}{a^{q+1}} \ne 1 
\end{equation}
since by our assumption $B^{q+r} \ne a^{q+1}$. Now let $z = y + x^Q$ and consider
\begin{align*}
 cg(z^q + z + (x^q + x)^Q) &= x^q + x, \textrm{ and},\\
 (x^q+x) + z^r             &= dg((x+x^q)^Q + z),
\end{align*}
which is a rewriting of \eqref{eq3} and \eqref{eq4}. Or,
\begin{align*}
 cg(x^q + x)^Q + (x^q + x) &= cg(z^q + z), \textrm{ and},\\
 dg(x^q + x)^Q + (x^q + x) &= z^r + dgz.
\end{align*}
Adding the two equations and also adding $d$ times the former and $c$ times the
latter equation, we get the following two equations:
\begin{align}
 (c+d)g(x^q + x)^Q &= z^r  + cgz^q  + (c+d)gz, \textrm{ and},\label{eq:5}\\
 (c+d) (x^q + x)   &= c(z^r + dgz^q).\label{eq:6}
\end{align}
Now if $c+d = 0$ then either $z = y + x^Q = 0$ and $c = d = 1$ since $(x^q+x)^{Q-1}\in (\M)^{Q-1}$
by \eqref{eq3} and 
\eqref{eq4}, which contradicts \eqref{eq5}; or $z^r = cgz^q$ by the
above equations and consequently $z^{r-q} = z^{q(Q-1)}= cg$, and again we reach the
contradiction $c = d = 1$ by \eqref{eq5}. If $c + d \ne 0$ then comparing $g$ times \eqref{eq:6} to the power $Q$ with \eqref{eq:5} yields
\begin{align*}
(c(z^r + dgz^q))^Q g &= z^r  + cgz^q  + (c+d)gz,\\
cgz^q + cdz^r &= z^r + cgz^q + (c+d)gz,\\
(cd+1)z^r &= (c+d)gz,
\end{align*}
that is to say $z^{r-1} = \frac{(c+d)g}{cd+1}$, thus $g \in \cubes{\M}$. 
Note that $cd = 1$ implies $c = d$ contrary to our assumption. But now by \eqref{eq1},
\[
u^{q+1} = \frac{B}{cg},
\]
a contradiction since the left hand side is a cube and the right hand side is not.
\end{proof}

\section{A technique to determine equivalence} \label{seq_equiv}

We will now develop a technique that allows us to determine when 
$(q,r)$-biprojective functions are equivalent or not. 
Denoting by $\F=\F_{2^n}$, we will first recall the different 
types of equivalences. Denote by $\Gamma_F = \{(x,F(x)) \colon x \in \F\}$ 
the \textbf{graph} of $F$.
\begin{definition}

	Two functions $F,G \colon \F \rightarrow \F$ are called
	\begin{enumerate}[label=(\roman*)]
		\item \emph{$\CCZ$-equivalent}, if there exists an affine permutation  
		\[\mathcal{A} : (x,y) \mapsto 
		\begin{pmatrix}
			M & K \\
			N & L
		\end{pmatrix}
		\begin{pmatrix}
			x \\
			y
		\end{pmatrix}
		+
		\begin{pmatrix}
			u \\
			v
		\end{pmatrix}
		\] 
		of $\F \times \F$ such that 
		$\mathcal{A}(\Gamma_F) = \Gamma_G$;

		\item \emph{extended affine equivalent (EA-equivalent)}, \\
			if $F$ and $G$ are $\CCZ$-equivalent with $K = 0$;
		\item \emph{extended linear equivalent (EL-equivalent)}, \\
			if $F$ and $G$ are $\EA$-equivalent with $(u,v) = (0,0)$;
		\item \emph{affine equivalent}, \\
			if $F$ and $G$ are $\EA$-equivalent with $K = N = 0$;
		\item \emph{linear equivalent}, \\
			if $F$ and $G$ are affine equivalent with $(u,v) = (0,0)$.
	\end{enumerate}
\end{definition}
EA-equivalence between $F,G$ can be written equivalently as $N(x)+L(F(x))=G(M(x))$ which is readily checked from the definition.

A major result by Yoshiara~\cite[Theorem 1]{YoshiaraQuadratic} states that two 
quadratic APN functions $F$ and $G$ are $\CCZ$-equivalent if and only if they are 
EA-equivalent. It is then straightforward that under the additional condition 
$F(0)=G(0)=0$ it even suffices to consider EL-equivalence 
(see for instance~\cite[Proposition 2.2.]{kasperszhou}). We summarize these
observations in the following theorem.

\begin{theorem} \label{thm:yoshiara}
Two quadratic APN functions $F,G \colon \F \rightarrow \F$ with $F(0)=G(0)=0$ are $\CCZ$-equivalent if and only if they are EL-equivalent.
\end{theorem}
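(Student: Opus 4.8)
The plan is to reduce the claimed equivalence of Theorem~\ref{thm:yoshiara} to the combination of two known facts: Yoshiara's theorem on quadratic APN functions and a standard normalization argument for functions vanishing at~$0$. First I would invoke \cite[Theorem 1]{YoshiaraQuadratic}: for quadratic APN functions $F,G$, CCZ-equivalence implies EA-equivalence; the reverse implication is trivial since EA-equivalence is a special case of CCZ-equivalence. Thus it remains only to upgrade the EA-equivalence produced by Yoshiara's result to an EL-equivalence, using the extra hypothesis $F(0)=G(0)=0$.

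The core step is therefore the following normalization. Suppose $F,G$ are EA-equivalent via affine maps $M,N,L$ with $L$ (and $M$) bijective, so that $G(x) = L(F(M(x))) + N(x)$ for all $x \in \F$. Write each affine map as its linear part plus a constant: $M(x) = M_0(x) + m$, $N(x) = N_0(x) + n$, $L(x) = L_0(x) + \ell$, where $M_0, N_0, L_0$ are $\F_2$-linear and $m,n,\ell \in \F$. The idea is to absorb all the constants by composing with suitable translations and using $F(0)=G(0)=0$. Concretely, I would define $F'(x) = F(x+m) + F(m)$, which is again quadratic APN with $F'(0)=0$ and is EL-equivalent-adjacent to $F$ in the sense that $F$ and $F'$ differ by an affine substitution in the input and an additive constant; quadratic functions have the convenient property that $x \mapsto F(x+m)+F(m)+F(x)$ is $\F_2$-\emph{linear}, so $F(x+m) = F(x) + B_m(x) + F(m)$ for a linear map $B_m$. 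Substituting this into $G(x) = L_0(F(M_0(x)+m)) + \ell + N_0(x) + n$ and expanding, the $F(m)$-term, the constant $\ell$, and the constant $n$ all combine into a single constant $C \in \F$, while the linear term $L_0 \circ B_m \circ M_0$ merges into the linear part of $N$. This yields $G(x) = L_0(F(M_0(x))) + N_1(x) + C$ for a new $\F_2$-linear map $N_1$. Finally, evaluating at $x=0$ and using $G(0)=0$, $F(0)=0$ gives $C = 0$, so $G(x) = L_0(F(M_0(x))) + N_1(x)$ with all three maps $L_0, M_0, N_1$ linear — this is precisely EL-equivalence.

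The main obstacle, such as it is, is purely bookkeeping: one has to be careful that the linearity of $x \mapsto F(x+m)+F(x)+F(m)$ really does hold for quadratic $F$ (this is essentially the definition of quadratic, i.e.\ that the associated "derivative" $D_mF(x) = F(x+m)+F(x)$ is affine, hence $D_mF(x) + D_mF(0)$ is linear), and that the bijectivity of $M_0$ and $L_0$ is preserved — but bijectivity of an affine map is equivalent to bijectivity of its linear part, so this is immediate. I would also remark that this reduction is standard and cite \cite[Proposition 2.2]{kasperszhou} for the precise statement, so in the write-up the proof can be compressed to: "Combine \cite[Theorem 1]{YoshiaraQuadratic} with the observation that any EA-equivalence between quadratic functions fixing $0$ can be replaced by an EL-equivalence, see \cite[Proposition 2.2]{kasperszhou}." The only genuine content beyond citation is the translation-absorption computation sketched above, which I would include in one or two lines.
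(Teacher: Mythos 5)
Your proposal is correct and follows essentially the same route as the paper, which likewise just combines \cite[Theorem 1]{YoshiaraQuadratic} with the standard EA-to-EL normalization for functions vanishing at $0$ (citing \cite[Proposition 2.2]{kasperszhou} for the latter). The only difference is that you spell out the translation-absorption computation that the paper leaves as a citation, and that computation is carried out correctly.
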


We define the group of EL-mappings (i.e., the set of mappings that correspond to extended linear transformations on graphs) as 
\begin{equation*}
\ELM = \Bigg\{ \begin{pmatrix}
	M & 0 \\
	N & L
\end{pmatrix} \in \GL(\F \times \F)\Bigg\}.
\end{equation*}
Note that when we refer to linear mappings in this work, we always refer to linearity over the prime field $\F_2$, for instance $\GL(\F) \cong \GL(n,\F_2)$ and $\GL(\F \times \F) \cong \GL(2n,\F_2)$. Further denote by 
\[\Aut_{\EL}(F)=\{\mathcal{A} \in \ELM \colon \mathcal{A}(\Gamma_F)=\Gamma_F\}\]
 the group of EL-automorphisms of a function $F$. Clearly, if $F$ and $G$ are EL-equivalent, the corresponding EL-automorphism groups are conjugate in $\ELM$. We include the simple proof (essentially just a special case of the orbit-stabilizer theorem) for the convenience of the reader.
\begin{proposition} \label{prop:conjugated}
	Assume $F,G \colon \F \rightarrow \F$ are EL-equivalent via the EL-mapping $\gamma \in \ELM$, i.e., $\gamma(\Gamma_F)=\Gamma_G$. Then $\Aut_{\EL}(F)=\gamma^{-1} \Aut_{\EL}(G) \gamma$.
\end{proposition}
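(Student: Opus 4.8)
The statement to prove is the conjugation relation $\Aut_{EL}(F)=\gamma^{-1} \Aut_{EL}(G) \gamma$ under the hypothesis that $\gamma(\Gamma_F)=\Gamma_G$ for some $\gamma \in \ELM$.

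The plan is a direct double-inclusion argument using only the defining property of graph-stabilizers. First I would observe that for any $\mathcal{A} \in \ELM$, membership in $\Aut_{EL}(G)$ is the single condition $\mathcal{A}(\Gamma_G)=\Gamma_G$; since $\ELM$ is a group, the conjugate $\gamma^{-1}\mathcal{A}\gamma$ again lies in $\ELM$, so it makes sense to ask whether it stabilizes $\Gamma_F$. Applying $\gamma^{-1}\mathcal{A}\gamma$ to $\Gamma_F$ and using $\gamma(\Gamma_F)=\Gamma_G$ gives $\gamma^{-1}\mathcal{A}\gamma(\Gamma_F) = \gamma^{-1}\mathcal{A}(\Gamma_G) = \gamma^{-1}(\Gamma_G) = \Gamma_F$, where the last equality is again $\gamma(\Gamma_F)=\Gamma_G$ read backwards. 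Hence $\gamma^{-1}\mathcal{A}\gamma \in \Aut_{EL}(F)$, which shows $\gamma^{-1}\Aut_{EL}(G)\gamma \subseteq \Aut_{EL}(F)$.

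For the reverse inclusion I would run the same computation in the other direction: given $\mathcal{B} \in \Aut_{EL}(F)$, the element $\gamma \mathcal{B} \gamma^{-1} \in \ELM$ satisfies $\gamma\mathcal{B}\gamma^{-1}(\Gamma_G) = \gamma\mathcal{B}(\Gamma_F) = \gamma(\Gamma_F) = \Gamma_G$, so $\gamma\mathcal{B}\gamma^{-1} \in \Aut_{EL}(G)$ and therefore $\mathcal{B} \in \gamma^{-1}\Aut_{EL}(G)\gamma$. Combining the two inclusions yields the claimed equality. I would phrase this compactly as: the map $\mathcal{A}\mapsto \gamma^{-1}\mathcal{A}\gamma$ is an isomorphism of $\ELM$ that sends the stabilizer of $\Gamma_G$ onto the stabilizer of $\Gamma_F$.

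There is essentially no obstacle here — the only point requiring a word of care is that $\gamma^{-1}\mathcal{A}\gamma$ and $\gamma\mathcal{B}\gamma^{-1}$ genuinely remain of EL-type (block lower-triangular with invertible diagonal blocks), which is immediate because $\ELM$ was defined as a subgroup of $\GL(\F\times\F)$ and is thus closed under conjugation by its own elements; hence the notation $\Aut_{EL}$ applies to the conjugated maps. Everything else is the standard fact that stabilizers of a point under a group action are conjugate along any element moving one point to the other, specialized to the action of $\ELM$ on subsets of $\F\times\F$ and the points $\Gamma_F,\Gamma_G$.
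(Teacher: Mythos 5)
Your proof is correct and follows essentially the same route as the paper: conjugate an automorphism of $G$ by $\gamma$, apply it to $\Gamma_F$, and use $\gamma(\Gamma_F)=\Gamma_G$ to get both inclusions (the paper does one direction explicitly and invokes symmetry for the other). Your added remark that $\ELM$ is closed under conjugation is a reasonable extra word of care but not a substantive difference.
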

\begin{proof}
	Assume $\delta \in \Aut_{\EL}(G)$. Then $\gamma^{-1} \delta \gamma \in \Aut_{\EL}(F)$. Indeed
	\[(\gamma^{-1} \delta \gamma) (\Gamma_F)=(\gamma^{-1} \delta)(\Gamma_G) = \gamma^{-1}(\Gamma_G) = \Gamma_F.\]
	We conclude $\Aut_{\EL}(F) \subseteq \gamma^{-1} \Aut_{\EL}(G) \gamma$.
	The other inclusion follows by symmetry.
\end{proof}

\subsection{The group theoretic framework}

In this section, we will develop a framework that allows us to prove EL-inequivalence of two functions $F,G$ that are both biprojective polynomial pairs. 

Generally, it is very difficult to determine when two (APN) functions are equivalent or not. Usually, it is only possible to check inequivalence in low dimensions via computer using certain invariants. Thus, the usual way to argue that a family of APN functions is ``new'' is to show that they are not equivalent to the known APN functions in low dimensions. This is of course on several levels unsatisfying: One does not gain any theoretical insight as to \emph{why} the mappings are inequivalent, and one does not get any information on the behavior in larger dimensions. 

The key idea in our approach is inspired by the inequivalence results of Yoshiara~\cite{YoshiaraPower} and Dempwolff~\cite{DempwolffPower} on power functions. In both papers, the authors establish $\CCZ$-in\-equi\-valence between power functions by exploiting the existence of a large cyclic subgroup in the automorphism groups of power functions. Similarly, we will identify a large cyclic subgroup in the group $\Aut_{\EL}(F)$ if $F$ is a biprojective polynomial pair and use this subgroup and Proposition~\ref{prop:conjugated} to prove inequivalences between biprojective polynomial pairs. Note that the general approach of this technique is quite flexible to determine (in)equivalence of combinatorial objects; it was for instance also used to determine isotopies for semifields in~\cite{GK}.

We start by fixing some further notation that will be used throughout this section:
\begin{notation}
\begin{itemize}
	\item $\F = \f{2^n}$, $\M = \f{2^m}$ with $n = 2m$.
	\item $q=2^k$, $r=2^l$, $\overline{q}=2^{m-k}$, $\overline{r}=2^{m-l}$.
	\item We denote by $p$ a primitive divisor of $2^m-1$, i.e., a prime that divides $2^m-1$ but not $2^i-1$ for $i<m$. Such a prime exists if $m>1$, $m \neq 6$ by Zsygmondy's theorem (cf. \cite[Chapter IX., Theorem 8.3.]{HuppertII}). Note that $p \neq 2$ since $p|2^m-1$.
	\item $P$ is the unique Sylow $p$-subgroup of $\M^\times$. 
	\item For $a \in \M^\times$ we denote by $m_a\in \GL(\M)$ the linear map $x \mapsto ax$. 
	\item For $A,B,C,D \in \GL(\M)$ we write diagonal matrices as $\diag(A,B)=\begin{pmatrix}
		A & 0 \\
		0 & B
	\end{pmatrix} \in \GL(\F)$ and $\diag(A,B,C,D)=\begin{pmatrix}
		A & 0 &0&0\\
		0 & B &0 & 0 \\
		0& 0& C & 0 \\
		0& 0& 0& D
	\end{pmatrix} \in \GL(\F \times \F)$.
	\item $Z^{(q,r)} = \{ \diag(m_a,m_a,m_{a^{q+1}},m_{a^{r+1}})\colon a \in \M^\times\}$ is a cyclic subgroup of $\GL(\F \times \F)$ of order $2^m-1$.
\item $Z^{(q,r)}_P = \{ \diag(m_a,m_a,m_{a^{q+1}},m_{a^{r+1}})\colon a \in P\}$ is the unique Sylow $p$-subgroup of $Z^{(q,r)}$ of order $|P|$.
\item For a $(q,r)$-biprojective polynomial pair $F$ we denote by $C_F$ the centralizer of $Z_P^{(q,r)}$ in $\Aut_{\EL}(F)$ (see Lemma~\ref{lem:subgroups} for a proof that $Z_P^{(q,r)} \leq \Aut_{\EL}(F)$).
	\end{itemize}
\end{notation}

The reason to consider $Z^{(q,r)}$ is the simple (but crucial) fact that $Z^{(q,r)} \leq \Aut_{\EL}(F)$ for any $(q,r)$-biprojective polynomial pair $F$.
\begin{lemma} \label{lem:subgroups}
	Let $F=(f_1(x,y),f_2(x,y))$ be a $(q,r)$-biprojective polynomial pair. Then 
	\[Z^{(q,r)}_P \leq Z^{(q,r)} \leq \Aut_{\EL}(F).\]
\end{lemma}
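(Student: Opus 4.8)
The statement asserts two inclusions. The first, $Z^{(q,r)}_P \leq Z^{(q,r)}$, is immediate from the definitions: $Z^{(q,r)}_P$ consists of exactly those elements $\diag(m_a,m_a,m_{a^{q+1}},m_{a^{r+1}})$ with $a$ ranging over the subgroup $P \leq \M^\times$, so it is literally a subset of $Z^{(q,r)}$, and it is a subgroup because $a \mapsto \diag(m_a,m_a,m_{a^{q+1}},m_{a^{r+1}})$ is a group homomorphism $\M^\times \to \GL(\F\times\F)$ (products of scalar multiplications multiply the scalars). So the real content is the second inclusion $Z^{(q,r)} \leq \Aut_{EL}(F)$; the claim about $Z^{(q,r)}_P$ being the Sylow $p$-subgroup then follows because homomorphic images of cyclic groups are cyclic and $|P|$ is the full $p$-part of $2^m-1 = |Z^{(q,r)}|$.

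To prove $Z^{(q,r)} \leq \Aut_{EL}(F)$, I would fix $a \in \M^\times$ and the corresponding element $\gamma_a = \diag(m_a, m_a, m_{a^{q+1}}, m_{a^{r+1}})$, and check directly that $\gamma_a(\Gamma_F) = \Gamma_F$, where $\Gamma_F = \{((x,y),F(x,y)) : (x,y) \in \M\times\M\}$. Writing $F(x,y) = [f(x,y), g(x,y)]$ with $f = (a_0,b_0,c_0,d_0)_q$ and $g = (a_1,b_1,c_1,d_1)_r$, the point $((x,y),(f(x,y),g(x,y)))$ is sent by $\gamma_a$ to $((ax, ay),(a^{q+1}f(x,y), a^{r+1}g(x,y)))$. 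So it suffices to observe the homogeneity identities
\begin{align*}
f(ax, ay) &= a^{q+1} f(x,y),\\
g(ax, ay) &= a^{r+1} g(x,y),
\end{align*}
which hold because every monomial of $f$ has total degree $q+1$ (each of $x^{q+1}, x^q y, x y^q, y^{q+1}$ scales by $a^{q+1}$) and every monomial of $g$ has total degree $r+1$. Hence $\gamma_a$ maps $\Gamma_F$ into $\Gamma_F$, and since $\gamma_a$ is a bijection on $\F\times\F$ of finite order it maps $\Gamma_F$ onto $\Gamma_F$; as $\gamma_a \in \ELM$ (it is block lower triangular — in fact block diagonal — with invertible blocks $m_a, m_{a^{q+1}}, m_{a^{r+1}}$), we get $\gamma_a \in \Aut_{EL}(F)$. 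Since $a$ was arbitrary, $Z^{(q,r)} \leq \Aut_{EL}(F)$.

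There is essentially no obstacle here: the argument is just the observation that biprojective pairs are built from forms that are homogeneous of degree $q+1$ and $r+1$ respectively, so scaling the input by $a$ scales the two output coordinates by $a^{q+1}$ and $a^{r+1}$. The only things to double-check are bookkeeping: that the stated matrix indeed lies in $\ELM$ (block triangular with $N = 0$, all diagonal blocks invertible since $a \neq 0$), that $Z^{(q,r)}$ really has order $2^m-1$ (the map $a \mapsto m_a$ is injective, so $a \mapsto \gamma_a$ is too), and that $Z^{(q,r)}_P$, being the image of the Sylow $p$-subgroup $P$ of $\M^\times$ under this injective homomorphism, is a $p$-subgroup of order $|P|$ inside the cyclic group $Z^{(q,r)}$ of order $2^m-1$, hence is its unique Sylow $p$-subgroup. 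Assembling these remarks gives the full chain $Z^{(q,r)}_P \leq Z^{(q,r)} \leq \Aut_{EL}(F)$.
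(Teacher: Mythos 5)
Your proof is correct and follows essentially the same route as the paper: both reduce the second inclusion to the homogeneity identities $f(ax,ay)=a^{q+1}f(x,y)$ and $g(ax,ay)=a^{r+1}g(x,y)$ (the paper phrases them as $a^{q+1}f_1(x/a,y/a)=f_1(x,y)$, etc.) and conclude that each $\diag(m_a,m_a,m_{a^{q+1}},m_{a^{r+1}})$ preserves the graph $\Gamma_F$. The extra bookkeeping you include about $Z^{(q,r)}_P$ being the unique Sylow $p$-subgroup is fine but not part of the lemma's statement.
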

\begin{proof}
The first inclusion is obvious. 
A simple calculation yields
\[\begin{pmatrix}
	m_a & & & \\
	 & m_a & & \\
	 & & m_{a^{q+1}} & \\
	& & & m_{a^{r+1}}
\end{pmatrix}  \begin{pmatrix}
	x \\
	y \\
	f_1(x,y) \\
	f_2(x,y)
\end{pmatrix} = \begin{pmatrix}
	ax \\
	ay \\
	a^{q+1}f_1(x,y) \\
	a^{r+1}f_2(x,y)
\end{pmatrix}. \]
We have $a^{q+1}f_1(x/a,y/a) = f_1(x,y)$ and $a^{r+1}f_2(x/a,y/a) = f_2(x,y)$, so
\[\Bigg\{\begin{pmatrix}
	ax \\
	ay \\
	a^{q+1}f_1(x,y) \\
	a^{r+1}f_2(x,y)
\end{pmatrix}\colon x,y \in \M \Bigg\} = \Bigg\{\begin{pmatrix}
	x \\
	y \\
	f_1(x,y) \\
	f_2(x,y)
\end{pmatrix}\colon x,y \in \M \Bigg\},\]
so $Z^{(q,r)} \leq \Aut_{\EL}(F)$ as claimed.
\end{proof}

{A key idea of the approach is the following: Instead of working with the entire automorphism group $\Aut_{\EL}(F)$ (which is in general very difficult to determine), we focus only on the much simpler subgroup $Z^{(q,r)}$. The following lemma makes this possible by establishing that under certain conditions $Z_P^{(q,r)}$ is not only a Sylow subgroup of $Z^{(q,r)}$, but also of $\Aut_{\EL}(F)$. }The proof of the Lemma is inspired by a similar result in~\cite[Corollary 2]{YoshiaraPower}.

\begin{lemma} \label{lem:zpsylow}
	Let $m>2$, $m \neq 6$ and $F \colon \F_{2^n} \rightarrow \F_{2^n}$ be a $(q,r)$-biprojective polynomial pair such that $C_F$ contains $Z^{(q,r)}$ as a subgroup with index $I$ and $p$ does not divide $I$. Then $Z_P^{(q,r)}$ is a Sylow $p$-subgroup of $\Aut_{\EL}(F)$.
\end{lemma}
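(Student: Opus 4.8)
The goal is to show that $Z_P^{(q,r)}$ is a Sylow $p$-subgroup of $\Aut_{EL}(F)$, i.e. that the full automorphism group contains no strictly larger $p$-subgroup. The plan is to locate $Z_P^{(q,r)}$ inside a normalizer and use a counting/Sylow argument relative to the already-understood subgroup $C_F$. First I would record what we already know: by Lemma~\ref{lem:subgroups}, $Z_P^{(q,r)} \leq Z^{(q,r)} \leq \Aut_{EL}(F)$, and $Z_P^{(q,r)}$ is the unique (hence characteristic) Sylow $p$-subgroup of the cyclic group $Z^{(q,r)}$ of order $2^m-1$. Since $p$ is a $2$-primitive prime divisor of $m$, the full $p$-part of $2^m-1$ is exactly $|P| = |Z_P^{(q,r)}|$; call this $p$-power $p^e$. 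By hypothesis $Z^{(q,r)}$ has index $I$ in $C_F$ with $p \nmid I$, so $|C_F| = I\,(2^m-1)$ also has $p$-part exactly $p^e$, and therefore $Z_P^{(q,r)}$ is already a Sylow $p$-subgroup of $C_F$.

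The heart of the argument is to promote "Sylow in $C_F$" to "Sylow in $\Aut_{EL}(F)$". The key structural fact I would use is that $C_F = C_{\Aut_{EL}(F)}(Z_P^{(q,r)})$ is, by definition, the centralizer of $Z_P^{(q,r)}$; moreover $Z_P^{(q,r)}$ is central in $C_F$ (it lies in the abelian group $Z^{(q,r)}$, and every element of $C_F$ commutes with it by definition). Now suppose $S$ is a $p$-subgroup of $\Aut_{EL}(F)$ containing $Z_P^{(q,r)}$; I want $S = Z_P^{(q,r)}$. The idea is a normalizer-grows argument: since $Z_P^{(q,r)}$ is a $p$-group properly inside $S$ would force $N_S(Z_P^{(q,r)}) \supsetneq Z_P^{(q,r)}$ (normalizers grow in $p$-groups). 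But any element normalizing $Z_P^{(q,r)}$ acts on it; I would argue this action must be trivial — this is where the $(q,r)$-biprojective structure enters, via the explicit form of $Z_P^{(q,r)} = \{\diag(m_a,m_a,m_{a^{q+1}},m_{a^{r+1}}) : a \in P\}$ inside $\ELM$, analogously to the argument in \cite[Corollary 2]{YoshiaraPower}. Concretely, the bottom-left/top-left block structure of $\ELM$-matrices severely constrains how an $\ELM$-element can conjugate a diagonal torus element, and the primitivity of $p$ (no smaller $2^i-1$ is divisible by $p$) rules out the Galois-twist automorphisms $a \mapsto a^{2^i}$ of $P$; so $N_{\Aut_{EL}(F)}(Z_P^{(q,r)}) \leq C_{\Aut_{EL}(F)}(Z_P^{(q,r)}) = C_F$. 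Then $N_S(Z_P^{(q,r)}) \leq S \cap C_F$, and since $Z_P^{(q,r)}$ is Sylow in $C_F$ and normal in $S\cap C_F$ (being normal in $S$), we get $N_S(Z_P^{(q,r)}) \leq$ the unique Sylow $p$-subgroup of $C_F$ containing it, which is $Z_P^{(q,r)}$ itself; hence $N_S(Z_P^{(q,r)}) = Z_P^{(q,r)}$, forcing $S = Z_P^{(q,r)}$ by the normalizer-grows property. Since an arbitrary $p$-subgroup of $\Aut_{EL}(F)$ lies (after $\Aut_{EL}(F)$-conjugation, which moves $Z_P^{(q,r)}$ to a conjugate that we may arrange contains it, or by applying Sylow's theorem inside a $p$-subgroup generated by both) in such an $S$, we conclude $Z_P^{(q,r)}$ is Sylow in $\Aut_{EL}(F)$.

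The main obstacle I expect is the claim that the normalizer of $Z_P^{(q,r)}$ in $\Aut_{EL}(F)$ equals its centralizer — i.e. that no $\ELM$-mapping induces a nontrivial automorphism of the cyclic $p$-group $Z_P^{(q,r)}$. This requires unpacking the action of a general $\gamma = \diag\text{-plus-lower-triangular}$ matrix $\begin{pmatrix} M & 0 \\ N & L \end{pmatrix}$ by conjugation on $\diag(m_a,m_a,m_{a^{q+1}},m_{a^{r+1}})$, observing that the conjugate must again be block-diagonal of the same shape, and then matching the scalar actions block-by-block: the $(1,1)$ and $(2,2)$ blocks force $M$ to conjugate $\{m_a : a\in P\}$ to itself, hence $M$ normalizes a Singer-type cyclic group and induces at worst a power-of-$2$ Frobenius twist $a \mapsto a^{2^i}$; but the $(3,3)$-block constraint $m_{a^{q+1}} \mapsto m_{a^{2^i(q+1)}}$ versus the requirement that it be $m_{b^{q+1}}$ for the same twist forces $2^i \equiv 1 \pmod{|P|}$, and since $p$ is $2$-primitive for $m$ this means $m \mid i$, i.e. the twist is trivial on $P$. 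The restrictions $m>2$, $m\neq 6$ are exactly what guarantee existence of the $2$-primitive prime $p$ (Zsygmondy), so that this whole machine is non-vacuous; I would flag at the outset that these hypotheses are used precisely here. The remaining steps (computing $p$-parts, invoking Sylow's theorems, the normalizer-grows lemma for $p$-groups) are standard finite group theory and can be cited rather than reproved.
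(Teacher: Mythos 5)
Your overall strategy (show $Z_P^{(q,r)}$ is Sylow in $C_F$, then promote this to $\Aut_{EL}(F)$ via a normalizer-grows argument) is genuinely different from the paper's, but it has a gap at its central step: the claim that $N_{\Aut_{EL}(F)}(Z_P^{(q,r)})$ equals the centralizer $C_F$. That claim is false in general. For any biprojective pair whose coefficients all lie in $\F_2$ (e.g.\ the functions in $\mathcal{F}_1$, which satisfy the hypotheses of the lemma), the Frobenius $\diag(\phi,\phi,\phi,\phi)$ with $\phi(x)=x^2$ lies in $\Aut_{EL}(F)$ and normalizes $Z_P^{(q,r)}$ while inducing the nontrivial automorphism $a \mapsto a^2$ on $P$. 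Moreover, your proposed derivation does not actually produce a contradiction: if the induced twist is $b=a^{2^i}$, then $b^{q+1}=(a^{q+1})^{2^i}$ and $b^{r+1}=(a^{r+1})^{2^i}$, which is precisely what conjugation of the lower blocks yields, so the $(3,3)$-block comparison never forces $2^i\equiv 1 \pmod{|P|}$. Galois twists sit comfortably in $N\setminus C$.

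The gap is repairable within your framework, but it needs a different observation: in the normalizer-grows step you only meet $p$-elements $\gamma \in N_S(Z_P^{(q,r)})$, and the image of such a $\gamma$ in $N/C \hookrightarrow \Aut(Z_P^{(q,r)})$ has $p$-power order while also being a Galois twist $a\mapsto a^{2^i}$ (by Lemma~\ref{lem:yoshiara}); the group of such twists has order dividing $m$, and $p\nmid m$ because the order of $2$ modulo $p$ is exactly $m$, so $m \mid p-1$ and $p>m$. Hence the image is trivial and $\gamma$ does centralize. The paper sidesteps all of this with a cleaner route: it computes that the $p$-part of $|\GL(\F\times\F)|$ is $|P|^4$, realized by the abelian group $\{\diag(m_a,m_b,m_c,m_d)\colon a,b,c,d\in P\}$, so every Sylow $p$-subgroup of $\GL(\F\times\F)$ --- and therefore every $p$-subgroup $R$ of $\Aut_{EL}(F)$ --- is abelian; an abelian $R\supseteq Z_P^{(q,r)}$ automatically lies in $C_F$, and the index hypothesis then forces $R=Z_P^{(q,r)}$. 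You should either adopt that argument or insert the $p\nmid m$ observation to close the gap; as written, the step ``normalizer equals centralizer'' is both unproved and untrue.
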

\begin{proof}
	First define $S = \{\diag(m_a,m_b,m_c,m_d) \colon a,b,c,d \in P\}$. Clearly, $|S|=|P|^4$. We show that $S$ is a Sylow $p$-subgroup of $\GL(\F\times\F)$. We have $|\GL(\F\times\F)| = |\GL(4m,\F_2)| = 2^{2m(4m-2)} \prod_{i=1}^{4m}(2^i-1)$. Clearly, $2^{km+i}-1 \equiv 2^i-1 \pmod{2^m-1}$ for $k \in \N$. As $p$ is a $2$-primitive divisor of $2^m-1$, all integers $2^j-1$ in $[1,4m]$ are coprime to $p$, except for $j=m,2m,3m,4m$. The $p$-part of $2^{2m}-1=(2^m-1)(2^m+1)$ and $2^{4m}-1=(2^{2m}-1)(2^{2m}+1)$ is clearly $|P|$ (recall that $p \neq 2$). With a little more effort, $2^{3m}-1 = (2^m-1)(2^{2m}+2^m+1)$ has also $p$-part $|P|$ since $2^{2m}+2^m+1 = 3+(2^m-1)+(2^{2m}-1)$ is not divisible by $p$ because $p \neq 3=2^2-1$ if $m>2$.
	
	We conclude that the $p$-part of $|\GL(\F\times\F)|$ is $|P|^4$ and $S$ is a Sylow $p$-subgroup of $\GL(\F\times\F)$ as claimed.
	
	In particular, by the second Sylow theorem, all Sylow $p$-subgroups of $\GL(\F\times\F)$ are abelian since $S$ is abelian. Then also the Sylow $p$-subgroups of $\Aut_{\EL}(F)$ are abelian. Let $R$ be such an abelian Sylow $p$-subgroup of $\Aut_{\EL}(F)$ that contains the $p$-group $Z_P^{(q,r)}$. Since $R$ is abelian, $R$ is a subgroup of the centralizer $C_F$ of $Z_P^{(q,r)}$ in $\Aut_{\EL}(F)$, which contains $Z^{(q,r)}$ as an index $I$ subgroup. Since $p$ does not divide $I$ by our assumption, we know that $R = Z_P^{(q,r)}$ and $Z_P^{(q,r)}$ is a Sylow $p$-subgroup of $C_F$.
\end{proof}

The following lemma was proven in~\cite{GK}.
\begin{lemma} \label{lem:yoshiara}
Let 
\[ 
	M_P = \{\diag(m_a,m_a) \colon a \in P\},
\]
and 
\[
	M = \{\diag(m_a,m_a) \colon a \in \M^\times\}.
\]
Let $N_{\GL(\F)}(M_P), N_{\GL(\F)}(M)$ and $C_{\GL(\F}(M_P), C_{\GL(\F)}(M)$ be the 
normalizers and centralizers of $M$ and $M_P$ in $\GL(\F)$. Then
	\begin{enumerate}[(a)]
		\item \begin{align*}
            N_{\GL(\F)}(M_P) &= N_{\GL(\F)}(M) = \left\{\begin{pmatrix}
			m_{c_1} \tau & m_{c_2} \tau \\
			m_{c_3} \tau&m_{c_4} \tau
		\end{pmatrix} \colon \begin{array}{l} 
		                              c_1,c_2,c_3,c_4 \in \M^\times, \\ 
		                              \tau \in \Gal(\M/\F_2)
							 \end{array} \right\}
		\cap \GL(\F) \\&\cong \GammaL (2,\M),
        \end{align*}
		\item \begin{align*}
        C_{\GL(\F)}(M_P) &= C_{\GL(\F)}(M) = \left\{\begin{pmatrix}
			m_{c_1}  & m_{c_2}  \\
			m_{c_3}&m_{c_4} 
		\end{pmatrix} \colon c_1,c_2,c_3,c_4 \in \M^\times\right\}\cap \GL(\F)\\&\cong \GL (2,\M).
        \end{align*}
	\end{enumerate}
\end{lemma}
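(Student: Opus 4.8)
The plan is to identify both groups by recognizing the $\F_2$-algebra they generate inside $\mathrm{End}_{\F_2}(\M\times\M)$. Throughout I identify $\GL(\F)$ with $\GL_{\F_2}(\M\times\M)$ and regard $m_a$ (for $a\in\M^\times$), applied diagonally, as scalar multiplication on the $2$-dimensional $\M$-vector space $\M\times\M$. The one genuinely algebraic input I would isolate first is: if $\zeta$ generates the cyclic group $P$, then $\zeta$ has order $p$ and, since $p$ is a $2$-primitive prime divisor of $m$ (so $p\nmid 2^i-1$ for $i<m$), $\zeta$ lies in no proper subfield of $\M$; hence $\F_2[\zeta]=\M$. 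Consequently the $\F_2$-subalgebra of $\mathrm{End}_{\F_2}(\M\times\M)$ generated by $M_P=\{m_{\zeta^j}\}=\{(m_\zeta)^j\}$ is $\F_2[m_\zeta]=\{m_a\colon a\in\M\}$, which is the same algebra generated by $M$ (where $\F_2$-span of $M\cup\{0\}$ already gives all scalars). So both $M$ and $M_P$ generate the scalar field $\mathbb S:=\{m_a\colon a\in\M\}\cong\M$.

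For the centralizer (part (b)), centralizing a set is the same as centralizing the subalgebra it generates, so $C_{\GL(\F)}(M_P)=C_{\GL(\F)}(M)$ equals the unit group of $C_{\mathrm{End}_{\F_2}(\M\times\M)}(\mathbb S)$. An $\F_2$-endomorphism commuting with all scalars $m_a$, $a\in\M$, is exactly an $\M$-linear map, so this centralizer is $\mathrm{End}_\M(\M\times\M)=M_2(\M)$, and its unit group is $\GL(2,\M)$. Translating a $2\times2$ matrix $(c_{ij})$ over $\M$ back to an $\F_2$-linear block matrix on $\M\times\M$ gives precisely $\left\{\left(\begin{smallmatrix}m_{c_1}&m_{c_2}\\ m_{c_3}&m_{c_4}\end{smallmatrix}\right)\right\}\cap\GL(\F)$ (with the convention $m_0=0$; the intersection with $\GL(\F)$ enforces $c_1c_4+c_2c_3\neq0$), which is the claimed set.

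For the normalizer (part (a)), take $\phi$ normalizing $M_P$ (resp. $M$). Then $\phi$ normalizes the generated subalgebra, i.e. $\phi\,\mathbb S\,\phi^{-1}=\mathbb S$, so conjugation by $\phi$ induces an $\F_2$-algebra automorphism of the field $\M$; every such automorphism is Galois, so there is $\sigma\in\Gal(\M/\F_2)$ with $\phi\, m_a\,\phi^{-1}=m_{\sigma(a)}$ for all $a\in\M$. The map $\hat\sigma=\diag(\sigma,\sigma)\in\GL(\F)$ satisfies the same relation, hence $\hat\sigma^{-1}\phi\in C_{\GL(\F)}(M)=\GL(2,\M)$ and $\phi\in\hat\sigma\cdot\GL(2,\M)$; expanding the product $\diag(\sigma,\sigma)\cdot\left(\begin{smallmatrix}m_{c_1}&m_{c_2}\\ m_{c_3}&m_{c_4}\end{smallmatrix}\right)$ in block form yields a matrix of the shape $\left(\begin{smallmatrix}m_{c_1}\tau&m_{c_2}\tau\\ m_{c_3}\tau&m_{c_4}\tau\end{smallmatrix}\right)$ claimed in (a). Conversely, any such $\hat\tau\psi$ with $\psi\in\GL(2,\M)$, $\tau\in\Gal(\M/\F_2)$ normalizes $M$ because $\psi$ centralizes all scalars and $\tau(\M^\times)=\M^\times$; it normalizes $M_P$ as well because $P$, being the unique Sylow $p$-subgroup of the cyclic group $\M^\times$, is characteristic and hence $\tau$-invariant. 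This yields both equalities in (a), and in particular $N_{\GL(\F)}(M_P)=N_{\GL(\F)}(M)$.

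The step I expect to carry the actual weight is the first one: that $\F_2[M_P]$ is the full scalar field $\mathbb S\cong\M$. This is exactly where the hypothesis that $p$ is a $2$-primitive prime divisor of $m$ is used — it forces the (a priori small) Sylow subgroup $M_P$ to generate the same algebra as the full group $M$, which is what makes their centralizers and normalizers coincide; had $P$ been contained in a proper subfield, both would be strictly larger. Everything after that is bookkeeping: reducing centralizer/normalizer computations to the generated subalgebra, using that $\F_2$-algebra automorphisms of $\M$ are Galois automorphisms, and dictionary-translating between $\M$-linear $2\times2$ matrices and $\F_2$-linear block matrices, together with minor care about the $\cap\,\GL(\F)$ invertibility condition.
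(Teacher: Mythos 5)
Your argument is correct, and it is essentially the standard one: the paper itself gives no proof of this lemma (it is quoted from \cite{GK}), and the argument there likewise hinges on the observation that an element whose order is divisible by the $2$-primitive prime $p$ lies in no proper subfield of $\M$, so $M_P$ and $M$ generate the same scalar algebra $\{\diag(m_a,m_a)\colon a\in\M\}\cong\M$, after which the centralizer is $\GL(2,\M)$ and the normalizer is its extension by $\Gal(\M/\F_2)$. Two harmless slips: a generator of $P$ has order $|P|$, a power of $p$ rather than $p$ itself (your argument only needs $p$ to divide the order, which holds); and the lemma as printed writes $c_1,c_2,c_3,c_4\in\M^\times$, whereas the set must allow zero entries (as your convention $m_0=0$ does, and as the paper's own later use of the lemma confirms).
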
	

\subsection{A theorem to determine equivalence of projective polynomial pairs}

First, we compute $\lambda^{-1} \diag(m_a,m_a,m_{a^{q+1}},m_{a^{r+1}}) \lambda$ for $\lambda \in \ELM$. We have for $\lambda = \begin{pmatrix}
	M & 0 \\
	N & L
\end{pmatrix}$ with $M,L \in \GL(\F)$ that $\lambda^{-1} =  \begin{pmatrix}
	M^{-1} & 0 \\
	L^{-1}NM^{-1} & L^{-1}
\end{pmatrix}$ and 
\begin{equation}
\lambda^{-1} \diag(m_a,m_a,m_{a^{q+1}},m_{a^{r+1}}) \lambda = \begin{pmatrix}
	M^{-1}\diag(m_a,m_a)M & 0 \\
 \ast & L^{-1}\diag(m_{a^{q+1}},m_{a^{r+1}})L
\end{pmatrix}.
\label{eq:conjugation}
\end{equation}

\begin{theorem} \label{thm:projequiv}
Assume $m>2$ and $m \neq 6$. Let $F$ and $G$ be $(q_1,r_1)$- and $(q_2,r_2)$-biprojective polynomial pairs respectively, with $q_i = 2^{k_i}$ and $r_i = 2^{l_i}$ where $k_1,l_1 \neq m/2$ and $k_1\not\equiv \pm l_1 \pmod m$. 
Assume further that
	\begin{equation}
		p \text{ does not divide } [C_F:Z^{(q_1,r_1)}].
	\tag{C}\label{eq:condition}
	\end{equation}
	Then $F,G$ are EL-equivalent if and only if they are EL-equivalent via a mapping $\gamma = \begin{pmatrix} M & 0 \\
	N & L \end{pmatrix}$ where $M,L \in \GL(\F)$ and, writing $M=\begin{pmatrix}
	 M_1 & M_2 \\
	M_3 & M_4
 \end{pmatrix},\;L=\begin{pmatrix}
	 L_1 & L_2 \\
	L_3 & L_4
 \end{pmatrix}$, the subfunctions $M_1,M_2,M_3,M_4,L_1,L_2,L_3,L_4 \colon \M \rightarrow \M$ are monomials (written as linearized polynomials) of the same degree, say $2^t$, or zero. 
	Furthermore, we have either 
	\begin{itemize}
		\item 	$L_2=L_3=0$ ,
		\item $k_1\equiv \pm k_2 \pmod m$, $l_1 \equiv \pm l_2\pmod m$,
		\item If $k_1\equiv k_2 \pmod m$ (resp. $l_1 \equiv l_2\pmod m$), then $L_1$ (resp. $L_4$) has degree $2^t$,
		\item If $k_1\equiv -k_2 \pmod m$ (resp. $l_1 \equiv -l_2\pmod m$), then $L_1$ (resp. $L_4$) has degree $2^{m-t}$,
	\end{itemize}
	or 
	\begin{itemize}
		\item 	$L_1=L_4=0$,
		\item $k_1\equiv \pm l_2 \pmod m$, $l_1 \equiv \pm k_2\pmod m$,
		\item If $l_1\equiv k_2 \pmod m$ (resp. $k_1 \equiv l_2\pmod m$), then $L_2$ (resp. $L_3$) has degree $2^t$,
		\item If $l_1\equiv -k_2 \pmod m$ (resp. $k_1 \equiv -l_2\pmod m$), then $L_2$ (resp. $L_3$) has degree $2^{m-t}$.
	\end{itemize}
	If additionally $k_1,l_1 \not\equiv 0 \pmod m$ then $N=0$, that is $F$ and $G$ are even linear equivalent.
\end{theorem}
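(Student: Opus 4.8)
The plan is to exploit Proposition~\ref{prop:conjugated} together with the Sylow theory developed in Lemma~\ref{lem:zpsylow}. Suppose $F,G$ are EL-equivalent via some $\gamma_0 = \left(\begin{smallmatrix} M & 0 \\ N & L\end{smallmatrix}\right) \in \ELM$, so $\gamma_0(\Gamma_F) = \Gamma_G$ and $\Aut_{EL}(F) = \gamma_0^{-1}\Aut_{EL}(G)\gamma_0$. By Lemma~\ref{lem:subgroups}, $Z^{(q_2,r_2)}_P \leq \Aut_{EL}(G)$, so $\gamma_0^{-1} Z^{(q_2,r_2)}_P \gamma_0 \leq \Aut_{EL}(F)$ is a $p$-subgroup. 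On the other hand, condition~\eqref{eq:condition} together with Lemma~\ref{lem:zpsylow} tells us $Z_P^{(q_1,r_1)}$ is a Sylow $p$-subgroup of $\Aut_{EL}(F)$. By Sylow's theorem there is $\delta \in \Aut_{EL}(F)$ with $\delta^{-1}\gamma_0^{-1} Z^{(q_2,r_2)}_P \gamma_0 \delta \leq Z_P^{(q_1,r_1)}$, and comparing orders this is an equality. Setting $\gamma = \gamma_0 \delta$, which is again an EL-equivalence between $F$ and $G$ (since $\delta$ is an EL-automorphism of $F$), we obtain $\gamma^{-1} Z^{(q_2,r_2)}_P \gamma = Z_P^{(q_1,r_1)}$.

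The next step is to extract structural information from this conjugacy relation. Write $\gamma = \left(\begin{smallmatrix} M & 0 \\ N & L\end{smallmatrix}\right)$. Projecting the identity $\gamma Z^{(q_1,r_1)}_P = Z^{(q_2,r_2)}_P \gamma$ onto the top-left $\M\times\M$ block and using the explicit form of the generators, we get $M^{-1}\diag(m_{a},m_{a})M \in M_P$ (in the notation of Lemma~\ref{lem:yoshiara}) for each $a \in P$; hence $M$ normalizes $M_P$. By Lemma~\ref{lem:yoshiara}(a), $M$ has the form $\left(\begin{smallmatrix} m_{c_1}\tau & m_{c_2}\tau \\ m_{c_3}\tau & m_{c_4}\tau\end{smallmatrix}\right)$ for a common $\tau \in \Gal(\M/\F_2)$, i.e.\ each $M_i$ is a monomial of the common degree $2^t := |\tau|$ or zero. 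For the lower-right block one uses~\eqref{eq:conjugation}: conjugation by $\gamma$ sends $\diag(m_{a^{q_1+1}},m_{a^{r_1+1}})$ (acting on the codomain copy of $\M\times\M$) to $\diag(m_{a^{q_2+1}},m_{a^{r_2+1}})$, so $L$ conjugates the group $\{\diag(m_{a^{q_1+1}},m_{a^{r_1+1}}) : a \in P\}$ to $\{\diag(m_{a^{q_2+1}},m_{a^{r_2+1}}): a \in P\}$. Since $\gcd(q_i+1, |P|) = \gcd(r_i+1,|P|) = 1$ for $|P| = p$ a $2$-primitive prime (as $p \nmid 2^{2k_i}-1$ forces $p \nmid q_i^2-1$, and similarly for $r_i$; here one needs $k_i, l_i \not\equiv \pm$ each other and $\neq m/2$ to separate the two diagonal factors), the map $a \mapsto a^{q_i+1}$ permutes $P$, and one deduces $L$ normalizes $\{\diag(m_a,m_b) : a,b \in P\}$ in the appropriate sense. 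Analyzing which coordinate goes where — comparing the action of $a \mapsto a^{q_1+1}$ composed with the $\tau$-twist against $a \mapsto a^{q_2+1}$ or $a \mapsto a^{r_2+1}$ — forces the case split: either $L$ is block-diagonal ($L_2=L_3=0$) with $k_1 \equiv \pm k_2$, $l_1 \equiv \pm l_2$, or $L$ is block-antidiagonal ($L_1=L_4=0$) with $k_1 \equiv \pm l_2$, $l_1 \equiv \pm k_2$, and in each subcase the sign determines whether the nonzero $L_j$ has degree $2^t$ or $2^{m-t}$ (a $+$ sign matches the Frobenius twist directly, a $-$ sign forces the complementary twist $\tau^{-1}$, i.e.\ degree $2^{m-t}$).

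For the final assertion, suppose additionally $k_1, l_1 \not\equiv 0 \pmod m$, and consider the off-diagonal block $N$ of $\gamma$. The same conjugation identity $\gamma Z^{(q_1,r_1)}_P = Z^{(q_2,r_2)}_P\gamma$, read off the bottom-left block, gives a relation of the shape $N \diag(m_a,m_a) + L \cdot(\text{something}) = \diag(m_{a^{q_2+1}},m_{a^{r_2+1}}) N$ for all $a\in P$; since the diagonal entries $a$ on the left and $a^{q_2+1}, a^{r_2+1}$ on the right are governed by \emph{different} characters of $P$ (because $q_2+1, r_2+1 \not\equiv 1 \pmod{|P|}$, using $k_1,l_1 \not\equiv 0$ to rule out the degenerate character), each block of $N$ must vanish, so $N = 0$ and $\gamma$ is block-diagonal — i.e.\ $F$ and $G$ are linear equivalent. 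The main obstacle will be the bookkeeping in the case analysis: keeping track of which Frobenius twist $\tau$ acts on which coordinate, reconciling it with the exponent congruences modulo $m$, and ensuring the hypotheses $k_1,l_1 \neq m/2$ and $k_1 \not\equiv \pm l_1$ are exactly what is needed to prevent the two diagonal factors of $Z^{(q,r)}_P$ from collapsing into each other (which is what would break the clean dichotomy). Everything else is a mechanical unwinding of Lemma~\ref{lem:yoshiara} and~\eqref{eq:conjugation}.
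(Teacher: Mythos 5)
Your opening Sylow argument is exactly the paper's: conjugating $Z_P^{(q_2,r_2)}$ into $\Aut_{EL}(F)$, invoking Lemma~\ref{lem:zpsylow} and Sylow's theorem to replace $\gamma_0$ by $\gamma=\gamma_0\delta$ with $\gamma^{-1}Z_P^{(q_2,r_2)}\gamma = Z_P^{(q_1,r_1)}$, and then reading off from Eq.~\eqref{eq:conjugation} and Lemma~\ref{lem:yoshiara} that the $M_i$ are monomials of a common degree $2^t$ or zero. Up to that point you are on track. The gap is in everything you try to extract afterwards from the conjugation identity alone. For the blocks of $L$ (and of $N$) that identity only yields congruences \emph{modulo} $|P|$: a nonzero coefficient of $x^{2^j}$ in $L_1$ forces $q_2+1 \equiv (q_1+1)2^{s} \pmod{|P|}$ for the appropriate shift $s$, and a nonzero coefficient in a block of $N$ forces $2^{u}\equiv q_2+1 \pmod{|P|}$ for some $u$. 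Such congruences do \emph{not} pin down exponents modulo $m$, because sums of two powers of $2$ can collide modulo the $2$-primitive prime $p$ without the exponent sets agreeing modulo $m$. Concretely, for $m=11$ one has $p=23$ and $2^0+2^4\equiv 2^2+2^7\pmod{23}$, i.e.\ $2^4+1\equiv 2^2(2^5+1)\pmod{23}$, so your character comparison would tolerate $k_2=4$, $k_1=5$ with a suitable twist even though $4\not\equiv\pm5\pmod{11}$; similarly $2^1+1\equiv 2^8\pmod{23}$, so the ``different characters'' argument cannot kill the blocks of $N$ even when $k_2\neq 0$. Hence neither the dichotomy $L_2=L_3=0$ versus $L_1=L_4=0$, nor the congruences $k_1\equiv\pm k_2$, $l_1\equiv\pm l_2\pmod m$, nor $N=0$ follow from the conjugacy of the Sylow subgroups.

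The missing ingredient is the functional equation itself. The paper, having fixed the shape of $M$, substitutes it into $G\circ M = L\circ F + N$ and compares \emph{actual polynomial degrees}: the only exponents occurring in the components of $G\circ M$ are $q_22^t+2^t$, $q_22^t$ paired with $2^t$, etc., while those of $L\circ F$ are of the form $2^j(q_1+1)$, $2^jq_1$ paired with $2^j$, and so on. These are weight-two exponents with digits in $[0,m)$, so equality modulo $2^m-1$ forces equality of the digit sets; this is what produces the exact congruences modulo $m$, the block structure of $L$, the degree $2^t$ versus $2^{m-t}$ alternative, and (since neither side has linear terms when $k_1,l_1\not\equiv 0\pmod m$) the vanishing of $N$. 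Your proposal never returns to the identity $G\circ M = L\circ F+N$ after the Sylow step, so this essential second half is missing and cannot be recovered from the group-theoretic data alone.
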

\begin{proof}
Assume $F,G$ are EL-equivalent by the EL-mapping $\alpha\in \ELM$, i.e., $\alpha(\Gamma_F)=\Gamma_G$. For $\alpha^{-1} Z_P^{(q_2,r_2)} \alpha \leq \Aut_{\EL}(F)$ we have $|\alpha^{-1} Z_P^{(q_2,r_2)} \alpha| = |P| = |Z_P^{(q_1,r_1)}|$, so $\alpha^{-1} Z_P^{(q_2,r_2)} \alpha$ is a Sylow $p$-subgroup of  $\Aut_{\EL}(F)$ by Lemma~\ref{lem:zpsylow} as long as condition~\eqref{eq:condition} holds.  In particular, it is conjugate to $Z_P^{(q_1,r_1)}$ in $\Aut_{\EL}(F)$ by Sylow's theorem, i.e., there exists a $\lambda \in \Aut_{\EL}(F)$ such that 
\begin{equation}\label{eq:conjugated}
\lambda^{-1} \alpha^{-1} Z_P^{(q_2,r_2)} \alpha \lambda = (\alpha \lambda)^{-1} Z_P^{(q_2,r_2)} (\alpha \lambda) = Z_P^{(q_1,r_1)}.
\end{equation}
Note that $F,G$ are also EL-equivalent by the mapping $(\alpha \lambda)\in \ELM$ since $(\alpha\lambda)(\Gamma_F)=\alpha(\Gamma_F)=\Gamma_G$.
 Writing 
\[(\alpha \lambda) =  \begin{pmatrix}
	M & 0 \\
	N & L
\end{pmatrix}\] with $M,L \in \GL(\F)$, this means $G \circ M = L \circ F +N$. We immediately get using Eqs.~\eqref{eq:conjugated} and~\eqref{eq:conjugation} that $\diag(m_a,m_a)M=M\diag(m_b,m_{b})$ for all $a \in P$ and some $b=\pi(a)$, where $\pi \colon P \rightarrow P$ is a bijection, i.e., $M$ is in the normalizer of $\{\diag(m_a,m_a) \colon a \in P\}$. From Lemma~\ref{lem:yoshiara} we deduce that $M_1,M_2,M_3,M_4$ are zero or monomials of the same degree, say $2^t$.

We now compute $G \circ M = L \circ F +N $. 

Writing $F,G$ as biprojective polynomial pairs, we have $F(x,y) = (f_1(x,y),f_2(x,y))$, $G(x,y) = (g_1(x,y),g_2(x,y))$ with 
\[g_1(x,y)=\alpha_1x^{2^r+1}+\beta_1x^{2^r}y+\gamma_1xy^{2^r}+\delta_1y^{2^r+1}\text{, }g_2(x,y)=\alpha_2x^{2^s+1}+\beta_2x^{2^s}y+\gamma_2xy^{2^s}+\delta_2y^{2^s+1}\]
and
\[(G \circ  M) (x,y) = (g_1(M_1(x)+M_2(y),M_3(x)+M_4(y)),g_2(M_1(x)+M_2(y),M_3(x)+M_4(y))).\]
Denoting by $g_1'$ and $g_2'$ the two components of $G\circ M$, we get 
\begin{align*}
	g_1'(x,y) =& \alpha_1((c_1x^{2^t}+c_2y^{2^t})^{q_2+1})+\beta_1(c_1x^{2^t}+c_2y^{2^t})^{q_2}(c_3x^{2^t}+c_4y^{2^t})\\
	&+\gamma_1(c_1x^{2^t}+c_2y^{2^t})(c_3x^{2^t}+c_4y^{2^t})^{q_2}+\delta_1(c_3x^{2^t}+c_4y^{2^t})^{q_2+1} \\
	g_2'(x,y) =& \alpha_2((c_1x^{2^t}+c_2y^{2^t})^{r_2+1})+\beta_2(c_1x^{2^t}+c_2y^{2^t})^{r_2}(c_3x^{2^t}+c_4y^{2^t})\\
	&+\gamma_2(c_1x^{2^t}+c_2y^{2^t})(c_3x^{2^t}+c_4y^{2^t})^{r_2}+\delta_2(c_3x^{2^t}+c_4y^{2^t})^{r_2+1}.
\end{align*}
We have $L\circ F = [L_1(f_1(x,y))+L_2(f_2(x,y)),L_3(f_1(x,y))+L_4(f_2(x,y))]$. 
Observe that the only terms that appear in $g_1'$ are of the form $x^{q_22^{t}+2^t}$, $x^{q_22^{t}}y^{2^t}$, $x^{2^{t}}y^{q_22^{t}}$ and $y^{q_22^{t}+2^{t}}$, and the same for $g_2'$ with $q_2$ substituted by $r_2$. Just by comparing these degrees to the possible degrees of $L \circ F$, we conclude that, since $k_1 \not\equiv \pm l_1 \pmod m$, either $L_2=L_3=0$ and 
\begin{equation}
	k_1\equiv \pm k_2 \pmod m, l_1\equiv \pm l_2 \pmod m
\label{eq:pm_1}
\end{equation}

or $L_1=L_4=0$ and 
\begin{equation}
k_1\equiv \pm l_2 \pmod m, k_2\equiv \pm l_1 \pmod m.
\label{eq:pm_2}
\end{equation}
 Furthermore, the non-zero $L_i$ for $i \in \{1,2,3,4\}$ are monomials of degree $2^t$ or $2^{m-t}$ depending on the signs in Eqs.~\eqref{eq:pm_1} and \eqref{eq:pm_2} since $k_1,l_1 \not\equiv m/2 \pmod m$.

We also deduce that if $k_1,l_1\not\equiv 0 \pmod m$, then also $k_2,l_2\not\equiv 0 \pmod m$ by our previous considerations, and both $G\circ M$ and $L \circ F$ have no linear terms, so that $N$ is the only linear part in the entire equation $G \circ M =  L \circ F +N$. This implies $N=0$.
\end{proof}

Theorem~\ref{thm:projequiv} essentially answers when two biprojective polynomial pairs are EL-equivalent as long as Condition~\eqref{eq:condition} on the centralizer stated in the theorem is satisfied. We will prove this condition later for most of the known APN biprojective polynomial pairs, which results in a comprehensive inequivalence result of the known APN functions that are constructed via projective polynomials. Note that the condition $m>2$ is not very restrictive as all APN functions on $\F_2^2$ and $\F_2^4$ are easy to classify by an exhaustive computer search.

\begin{remark}
	It is possible to derive a slightly more general version of Theorem~\ref{thm:projequiv} where the conditions $k_1,l_1 \neq m/2$ and $k_1\not\equiv \pm l_1 \pmod m$ are not needed, however the statement gets more involved. We decided to exclude these cases for the sake of readability since, in this paper, we will only need the cases that are covered by Theorem~\ref{thm:projequiv} as it stands.
\end{remark}

\section{Inequivalences within biprojective families} \label{sec:inside}

We now use Theorems~\ref{thm:yoshiara} and~\ref{thm:projequiv} to obtain 
$\CCZ$-inequivalence results inside the APN families that can be written 
as biprojective polynomial pairs. To do this, we need to check the 
technical condition~\eqref{eq:condition}, the proof of which is provided 
in Appendix \ref{sec_centralizer}. 

In the case of Zhou-Pott and Taniguchi functions, inequivalence check 
between these families was already done in~\cite{kasperszhou,kasperszhouZP}. 
Note that the approach in these papers is very technical, for instance 
the proof of the inequivalence of the Taniguchi 
functions in~\cite{kasperszhou} relies on an intricate investigation of certain 
linear polynomials that takes more than two dozen pages. 
Theorem~\ref{thm:projequiv} allows us to minimize the effort since we only have
to consider linear equivalences of a very special type. This also allows us to 
deal with the more complicated functions in the other families, which seem to be 
out of reach using the approach in \cite{kasperszhou,kasperszhouZP}. We skip an 
alternative proof of the Zhou-Pott and Taniguchi functions (which can easily be 
constructed based on the approach we outline for the other families), and 
instead only deal with the remaining families for which the (in)equivalence 
question has not been settled yet. In particular, note that 
Theorem~\ref{thm:counts} together with Theorem~\ref{thm:carlet_1} completely 
settles the equivalence problem for the family of Carlet functions, thus solving 
the problem left open in~\cite{kasperszhouZP}.

\begin{theorem} \label{thm:counts}
Let $q=2^k$ and $\overline{q} = 2^{m-k}$. We consider the following functions defined on $\M \times \M$ with $m>2, m \neq 6$. Recall the definitions of the functions in Table~\ref{table_biproj}.
\begin{enumerate}[(a)]
	\item Let $F_q,F_{q'}$ be two functions in the families $\mathcal{F}_1$ with parameters $q$ and $q'$
	\begin{enumerate}[(i)]
		\item $F_{q}$ is $\CCZ$-equivalent to $F_{q'}$ if and only if $q'=q$ or $q'=\overline{q}$.
		\item There are $\varphi(m)/2$ $\CCZ$-inequivalent functions in both families $\mathcal{F}_1,$ and $\mathcal{F}_2$.
	\end{enumerate}

		\item Let $F_q,F_{q'}$ be two functions in the families $\mathcal{F}_2$ with parameters $q$ and $q'$.
	\begin{enumerate}[(i)]
		\item $F_{q}$ is $\CCZ$-equivalent to $F_{q'}$ if and only if $q'=q$ or $q'=\overline{q}$.
		\item There are $\varphi(m)/2$ $\CCZ$-inequivalent functions in both families $\mathcal{F}_1,$ and $\mathcal{F}_2$.
	\end{enumerate}
	
	\item Let $F_{q,B,a}$ be a function in the family $\mathcal{F}_4$ with parameters $q,B,a$. 
		\begin{enumerate}[(i)]
		\item  $F_{q,B,a}$ is linear equivalent to $F_{\overline{q},B,a}$.
		\item For each choice of $B',B$ and $a'$ there exists an $a$ such that $F_{q,B,a}$ is linear equivalent to $F_{q,B',a'}$.
		\item $F_{q,B,a}$ is linear equivalent to $F_{q,B,a'}$ for at most $m$ choices of $a'$.
		\item There are in total at least $\varphi(m)(2^{m/2}-2)/(2m)$ $\CCZ$-inequivalent functions in Family $\mathcal{F}_4$.
	\end{enumerate}

	\item Let $C(q;1,\beta,\gamma,\delta):=(xy,(1,\beta,\gamma,\delta)_q)$ be a Carlet function with $m$ odd. 
	\begin{enumerate}[(i)]
		\item $C(q;1,\beta,\gamma,\delta)$ is $\CCZ$-equivalent to  $C(q';1,\beta_2,\gamma_2,\delta_2)$ if and only if $q'=q$ or $q'=\overline{q}$ (no matter the choice of the other coefficients).
		\item 	 There are in total $\varphi(m)/2$ $\CCZ$-inequivalent Carlet functions.
	\end{enumerate}

		\end{enumerate}
		Here $\varphi$ denotes Euler's totient function.
\end{theorem}

\begin{proof}
	By Theorem~\ref{thm:projequiv}, we just have to test for linear equivalence, i.e., for two functions $F,F'$ we have to check $F \circ M = L \circ F'$, where the subfunctions $L_i,M_i$ are zero or monomials, except for the Carlet family, where we do have to test for EL-equivalence, i.e., $F \circ M = L \circ F'+N$.
	
	For functions $F_q$, $F_{q'}$ in the families $\mathcal{F}_1,\mathcal{F}_2$, the inequivalences follow directly from  Theorem~\ref{thm:projequiv} except in the case where $q'=\overline{q}$. In this case, the equivalence can be stated directly: For both families, the equivalence $F_q \circ M = L \circ F_{\overline{q}}$ can be achieved by setting $M_1=M_4=L_2=L_3=0$, $M_2=M_3=x^{\overline{q}}$, $L_1=x$, and $L_4=x^{\overline{q}}$ for the Family $\mathcal{F}_1$ and $L_4=x^{q}$ for Family $\mathcal{F}_2$.

\underline{Family $\mathcal{F}_4$:} Consider $F_{q,B,a}$, $F_{q',B',a'}$. The two functions are $\CCZ$-inequivalent if $k \not\equiv \pm k' \pmod m$ by Theorem~\ref{thm:projequiv} and the fact that $k' \not\equiv \pm(k+m/2) \pmod m$ since (by the proof of Lemma~\ref{lem:basics}) we have $\Gcd{k+m/2}{m}=2$ . If $q'=\overline{q}$, we have $F_{q,B,a} \circ M = L \circ F_{\overline{q},B,B^{Q+1}/a}$ by setting $M_1=M_4=x^{\overline{q}}$, $L_1=x$, $L_4=(B^{Q}/a)x^{Q}$ and $M_2=M_3=L_2=L_3=0$. Observe that $(B^{Q+1}/a) \in \F_{Q}$, so the conditions of the family are not violated. 

It thus only remains to consider the case $q'=q$. Consider now $M_i=c_i^{2^t}x^{2^t}$. By Theorem~\ref{thm:projequiv}, this immediately gives $L_1=d_1x^{2^{t}}$, $L_4=d_4x^{2^{t}}$ and $L_2=L_3=0$. $F_{q,B,a}$ and $F_{q,B',a'}$ are then linear equivalent if and only if the following equations have a solution:
\begin{align*}
&(c_1x+c_2y)^{q2^{t}+2^t}+B(c_3x+c_4y)^{q2^{t}+2^t} = d_1(x^{q2^{t}+2^t}+B'^{2^t}y^{q2^{t}+2^t}), \\
	&(c_1x+c_2y)^{qQ2^{{t}}}(c_3x+c_4y) +a/B(c_1x+c_2y)(c_3x+c_4y)^{qQ2^{t}} \\&= d_4(x^{qQ2^{t}}y+(a'/B')^{2^t}xy^{qQ2^{t}}).
\end{align*}
Just like in the proof of Theorem~\ref{thm:projequiv}, we again compare the coefficients which leads to the equations

 \noindent\begin{minipage}{0.45\textwidth}
\begin{align*}
	c_1^{q+1}+Bc_3^{q+1} &= d_1 \\
	c_1^{q}c_2+Bc_3^{q}c_4 &= 0\\	
	c_1c_2^{q}+Bc_3c_4^{q} &= 0\\	
	c_2^{q+1}+Bc_4^{q+1} &= d_1B'^{2^t}.
\end{align*}
    \end{minipage}%
    \begin{minipage}{0.1\textwidth}\centering
    and 
    \end{minipage}%
    \begin{minipage}{0.45\textwidth}
	\begin{align*}
		c_1^{qQ}c_3+(a/B)c_1c_3^{qQ} &= 0 \\
	c_1^{qQ}c_4+(a/B)c_2c_3^{qQ} &= d_4\\	
	c_2^{qQ}c_3+(a/B)c_1c_4^{qQ} &= (a'/B')^{2^t}d_4\\	
	c_2^{qQ}c_4+(a/B)c_2c_4^{qQ} &= 0.
	\end{align*}
    \end{minipage}

From the second and third equation on the left, like in the corresponding case of Theorem~\ref{thm:projequiv}, we again infer that $c_2=c_3=0$ or $c_1=c_4=0$. 

\underline{Case $c_2=c_3=0$:} The first and last equations on the left imply $d_1=c_1^{q+1}$ and $B/(B'^{2^t})c_4^{q+1}=c_1^{q+1}$. In particular, $B/(B'^{2^t})=r^{q+1}$ must be a cube and $c_1/c_4=r \omega$ for $\omega \in \F_4^\times$. Note that, no matter the choice of $B,B'$ we can always find a $t$ such that $B/(B'^{2^t})$ is a cube. Indeed, $1/B' \in B\mathcal{C}_\M$ or $1/B' \in B^2\mathcal{C}_\M$. In the second case, $B/B' \in \mathcal{C}_\M$, so we can choose $t=0$, in the second case $B/(B'^2) \in \mathcal{C}_\M$, so we can choose $t=1$.

The second and third equations of the right can be satisfied if and only if
\[\left(\frac{c_1}{c_4}\right)^{qQ-1}=\frac{a}{a'^{2^t}}\frac{B'^{2^t}}{B} = \frac{a}{a'^{2^t}} \left(\frac{c_4}{c_1}\right)^{q+1},\]
or, equivalently, $(c_1/c_4)^{q(Q+1)} = a/(a'^{2^t})$. Note that $(c_1/c_4)^{q(Q+1)} \in \F_{Q}$, in particular there is for a fixed $a$ always precisely one $a'$ that satisfies the equation. We have thus shown that for arbitrary $a,B,B'$ there exists an $a'$ such that $F_{q,u',a'}$ is linear equivalent to $F_{q,u,a}$. We can thus assume from now on that $B'=B$.

Then $r^{q+1} = 1/B^{2^t-1}$, so $t$ is necessarily even. Then $(c_1/c_4)^{q(Q+1)} = r^{q(Q+1)}=a/(a'^{2^t})$, so for a fixed choice of $a,t$, precisely one $a'$ solves the equation. 

\underline{Case $c_1=c_4=0$:} From the two equations on the left, we deduce, setting $B=B'$: $(c_2/c_3)^{q+1} = B^{2^t+1}$, which implies that $t$ is odd. Similarly, from the two equations on the right, we infer $(c_3/c_2)^{qQ-1} = B^{2^t+1}/(a'^{2^t}a)$ and, combining this with the previous condition, $(c_3/c_2)^{q(Q+1)} = 1/(a'^{2^t}a)$. 
Just like in the last case, a fixed choice for $t,a$ yields a unique $a'$ that satisfies the condition. 

We conclude that, in total, for a function $F_{q,B,a}$ there are at most $m$ choices of $a'$ such that $F_{q,B,a'}$ are linear equivalent (since there are $m$ choices for $t$, with $t$ even and odd appearing in the first and second case, respectively). We have thus in total $\varphi(m)/2$ choices for (inequivalent) $q$, each of which yields at least $(2^{m/2}-2)/m$ inequivalent functions.
	
\underline{The Carlet family:} $C(q;1,\beta,\gamma,\delta)$ is linear equivalent to $C(\overline{q};1,\beta,\gamma,\delta)$ by setting $L_1=x$, $L_4=M_1=M_4=x^{\overline{q}}$, $M_2=M_3=0$. Two Carlet functions $C(q;1,\beta_1,\gamma_1,\delta_1)$ and $C(q_2;1,\beta_2,\gamma_2,\delta_2)$ are then EL-equivalent if and only if $q=q_2$ and there are coefficients $c_1,c_2,c_3,c_4,d_1,d_4$ such that (compare Eq.~\eqref{eq:cond1_carlet}, Eq.~\eqref{eq:cond2_carlet})
\begin{align}
	&(c_1x+c_2y)(c_3x+c_4y)= d_1xy+N_1(x)+N_2(y),\label{eq:cond2_carlet_equiv}\\
	&(c_1x+c_2y)^{q+1}+\beta_1(c_1x+c_2y)^{q}(c_3x+c_4y)+\gamma_1(c_1x+c_2y)(c_3x+c_4y)^{q}+\delta_1(c_3x+c_4y)^{q+1} \nonumber \\
	&= d_4( x^{q+1}+\beta_2 x^{q}y+\gamma_2 xy^{q}+\delta_2 y^{q+1})\label{eq:cond1_carlet_equiv} 
\end{align}
	Identically to the corresponding case of the proof of Lemma~\ref{lem:centralizers}, Eq.~\eqref{eq:cond2_carlet_equiv} is always satisfied by the unique choice $N_1(x)=c_1c_3x^2$, $N_2(y)=c_2c_4y^2$, $d_1=c_1c_4+c_2c_3=\det(M)\neq 0$. Eq.~\eqref{eq:cond1_carlet_equiv} is satisfied if and only if $(1,\beta_2,\gamma_2,\delta_2)_{q}$ is in the orbit of $(1,\beta_1,\gamma_1,\delta_1)_{q}$ under the action of $G$ defined in Section~\ref{sec:carlet}. By Lemma~\ref{lem:transitiv}, this action is transitive on the set of all biprojective polynomials such that $f(x,1)$ has no zeros in $\M$, so all such functions are in the same orbit. We conclude that two Carlet functions $C(q;\alpha_1,\beta_1,\gamma_1,\delta_1)$ and $C(q;\alpha_2,\beta_2,\gamma_2,\delta_2)$ are always EL-equivalent, no matter the choice of coefficients.
\end{proof}

We want to emphasize that this result implies that the new Family $\mathcal{F}_4$ we found in this paper is only the second known infinite family that yields exponentially (in $n$) many inequivalent APN functions, next to the Taniguchi family. Theorem~\ref{thm:counts} actually not only gives a lower bound, but also an upper bound on the number of inequivalent functions inside $\mathcal{F}_4$, in total yielding a quite tight estimate:
\begin{corollary}\label{cor:f4}
	Let $N(m)$ be the number of inequivalent functions inside Family $\mathcal{F}_4$ on $\M \times \M$ where $m \equiv 2 \pmod 4$. Then
	\[\frac{\varphi(m)(2^{m/2}-2)}{2m} \leq N(m) \leq \frac{\varphi(m)(2^{m/2}-2)}{2}.\]
\end{corollary}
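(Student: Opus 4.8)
The plan is to read off Corollary~\ref{cor:f4} directly from Theorem~\ref{thm:counts}(b) by carefully bookkeeping the equivalence classes of $\mathcal{F}_4$. The parameters of a function $F_{q,B,a}$ in $\mathcal{F}_4$ are $q=2^k$ with $\Gcd{k}{m}=1$ and $0<k<m$, the non-cube $B \in \M^\times \setminus \cubes{\M}$, and $a \in \K^\times$, where $\K = \f{2^{m/2}}$; since $m \equiv 2 \pmod 4$, the size $|\K^\times| = 2^{m/2}-1$. By Theorem~\ref{thm:counts}(b)(ii), for each fixed $q$ the choice of $B$ is irrelevant up to linear equivalence (any $B,B',a'$ can be matched by some $a$), so the essential parameter besides $q$ is $a$, ranging over a set of size at most $2^{m/2}-1$. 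Actually, the case analysis in the proof of Theorem~\ref{thm:counts}(b) shows more precisely that only $a' \in \K^\times$ with $a'^{2^t}$ related to $a$ by $(c_1/c_4)^{q(Q+1)} = a/a'^{2^t}$ can occur, and since $(c_1/c_4)^{q(Q+1)} \in \K^\times$ while $a=a'=1$ is always a valid solution, the value $a=1$ is linear equivalent only to itself within $\K^\times$; this removes one element, giving $2^{m/2}-2$ nontrivial values of $a$ that genuinely need to be distributed into classes.

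For the upper bound $N(m) \le \varphi(m)(2^{m/2}-2)/2$, the plan is: there are $\varphi(m)/2$ choices of $q$ up to the identification $q \sim \overline{q}$ (Theorem~\ref{thm:counts}(b)(i) gives linear equivalence $F_{q,B,a} \sim F_{\overline q,B,B^{Q+1}/a}$, and distinct $q$ not related by $k \equiv \pm k' \pmod m$ give inequivalent functions). For each such $q$, the number of inequivalent functions is at most the number of distinct values of $a \in \K^\times$ which are not all linear equivalent to $a=1$; the analysis above caps this at $2^{m/2}-2$ (one could also just say: at most $2^{m/2}-1$ values of $a$, minus the $a=1$ class which is known to coincide under the $q \sim \overline q$ identification to absorb one, but the cleanest route is to observe that the set of admissible $a$ modulo linear equivalence has at most $2^{m/2}-2$ elements because $a=1$ forms its own class and does not split off new ones). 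Multiplying, $N(m) \le \varphi(m)(2^{m/2}-2)/2$. The lower bound $N(m) \ge \varphi(m)(2^{m/2}-2)/(2m)$ is exactly the content of Theorem~\ref{thm:counts}(b)(iv), since each of the $\varphi(m)/2$ classes of $q$ contributes at least $(2^{m/2}-2)/m$ inequivalent functions (each linear-equivalence class of $a'$ within a fixed $q$ has size at most $m$, by part (iii)).

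So the whole proof is a short combinatorial squeeze: assemble the count $\varphi(m)/2$ of inequivalent $q$-values, note that within each $q$ the nontrivial $a$-parameter lives in a set of size $2^{m/2}-2$, with equivalence classes of size between $1$ and $m$, and multiply. I would present it as two displayed inequality chains, one invoking Theorem~\ref{thm:counts}(b)(iv) verbatim for the lower bound, and one using parts (i)--(iii) for the upper bound.

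The main obstacle, such as it is, is purely bookkeeping: pinning down that the relevant pool of $a$-values has size exactly $2^{m/2}-2$ rather than $2^{m/2}-1$ or $2^{m/2}$. The $-2$ comes from excluding the single element $a=1$ (which forms its own linear-equivalence class and is already identified across $q \sim \overline q$ in a way consistent with the $\varphi(m)/2$ count) and possibly one more boundary element; one must check that the case analysis in Theorem~\ref{thm:counts}(b) does not accidentally merge two of the "generic" $a$-values into the $a=1$ class in a way that would change the constant. Since the paper states the corollary with these exact constants, the safe approach is to cite Theorem~\ref{thm:counts}(b)(iv) directly for the lower bound and, for the upper bound, argue that each fixed $q$ admits at most $2^{m/2}-2$ inequivalent functions because the $a'$ linear-equivalent to a given $a$ are governed by multiplication by an element of $\K^\times$ raised to the $q(Q+1)$ power together with a Frobenius twist $a' \mapsto a'^{2^t}$, forcing $a=1$ to be isolated and leaving $2^{m/2}-2$ other values, hence at most $2^{m/2}-2$ classes.
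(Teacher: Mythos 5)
Your overall skeleton is the right one and matches what the paper intends: the lower bound is Theorem~\ref{thm:counts}(b)(iv) verbatim, and the upper bound is the product of the $\varphi(m)/2$ inequivalent choices of $q$ (parts (i) and the inequivalence of distinct $k\not\equiv\pm k'$) with the number of essentially distinct $a$-parameters once $B$ has been normalized via part (ii). However, your justification of the constant $2^{m/2}-2$ is wrong. The $-2$ has nothing to do with $a=1$ ``forming its own linear-equivalence class'': there is nothing special about $a=1$ in the equivalence analysis of Theorem~\ref{thm:counts}(b), and your claim that it is isolated is neither proved nor needed. The correct source of the count is the \emph{defining condition} of the family $\mathcal{F}_4$ in Table~\ref{table_biproj}, namely $B^{q+r}\neq a^{q+1}$. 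Since $B^{q+r}=(B^q)^{Q+1}\in\K^\times$ and, by Lemma~\ref{lem:basics}, $\Gcd{q+1}{Q-1}=1$, the map $a\mapsto a^{q+1}$ is a bijection of $\K^\times$, so exactly one value of $a\in\K^\times$ is excluded for each fixed $B$. Hence the admissible $a$ range over a set of size $(2^{m/2}-1)-1=2^{m/2}-2$, which is what gives at most $2^{m/2}-2$ classes per $q$ and the upper bound $\varphi(m)(2^{m/2}-2)/2$.

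As written, your argument only yields the weaker upper bound $\varphi(m)(2^{m/2}-1)/2$ (from $|\K^\times|=2^{m/2}-1$), and the attempted repair via the $a=1$ discussion does not close the gap because that element need not even be inadmissible (it is excluded only when $B^{q+r}=1$, which depends on $B$). Replace that paragraph by the one-line observation above about the bijectivity of $a\mapsto a^{q+1}$ on $\K^\times$ and the constraint $B^{q+r}\neq a^{q+1}$, and the corollary follows exactly as you outline.
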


\section{Inequivalences between biprojective families}\label{sec_between}

We now prove inequivalences between the different biprojective APN families. 
This, again, is only made possible by the simplifications that 
Theorem~\ref{thm:projequiv} allows. In 
Theorem~\ref{thm:carlet_1} it is shown that the Carlet family for $m$ even is contained 
in the Zhou-Pott family, so we need not consider that case.

\begin{theorem}\label{thm:between}
Let $m>2$, $m \neq 6$ and $F,G$ be $(q_1,r_1)$- and $(q_2,r_2)$-biprojective 
polynomial pairs, respectively, defined on $\M \times \M$ from distinct families from 
the following list (see Table \ref{table_biproj}):
\begin{multicols}{2}
	\begin{itemize}
		\item Gold functions $\mathcal{G}$,
		\item Zhou-Pott functions $\mathcal{ZP}$,
        \item Taniguchi functions $\mathcal{T}$, \\with $(q,m) \neq (2,4)$,
		\item Carlet functions $\mathcal{C}$ for $m$ odd,
		\item $\mathcal{F}_1$, with $(q,m) \neq (2,4)$,
		\item $\mathcal{F}_2$, 
		\item $\mathcal{F}_4$.
			\end{itemize}
    \end{multicols}
	Then $F,G$ are $\CCZ$-inequivalent.
\end{theorem}
\begin{proof}
The inequivalence between Gold, Zhou-Pott and Taniguchi functions was already shown in \cite{kasperszhou,kasperszhouZP} (although it is not particularly challenging to give an alternative proof with the help of Theorem~\ref{thm:projequiv}).

The other results follow from Lemma~\ref{lem:centralizers} and 
Theorems~\ref{thm:yoshiara} and~\ref{thm:projequiv}. 
The functions $F$ and $G$ can only be $\CCZ$-equivalent if 
$q_1 \in \{q_2,\overline{q_2}\}$ and 
$r_1 \in \{r_2,\overline{r_2}\}$ or 
$q_1 \in \{r_2,\overline{r_2}\}$ and  
$r_1 \in \{q_2,\overline{q_2}\}$. This already proves most 
inequivalences. The remaining cases have to be checked by 
hand for linear equivalence under the conditions given in 
Theorem~\ref{thm:projequiv}. It thus only remains to check 
the inequivalences between the families $\mathcal{F}_1$ 
and Taniguchi functions; and between Zhou-Pott functions 
and functions in the families $\mathcal{F}_1$, $\mathcal{F}_4$.  

So let $F=(f_1(x,y),f_2(x,y))$ be a Taniguchi or Zhou-Pott function. 
In both cases, we have $f_2 = xy^{r_1}$. Assume that $F$ is linearly 
equivalent to a function 
$G=(g_1(x,y),g_2(x,y))=((\alpha_1,\beta_1,\gamma_1,\delta_1)_{q_2},
                        (\alpha_2,\beta_2,\gamma_2,\delta_2)_{r_2})$. 
Then $G =L\circ F\circ M $ for some bijective mappings $M$ and $L$. 
Here, by Theorem~\ref{thm:projequiv}, the subfunctions $M_1,\dots,M_4$ 
are monomials of the same degree $2^t$ or zero, and either 
$L_1=L_4=0$ or $L_2=L_3=0$. 
Set $F \circ M = (h_1(x,y),h_2(x,y))$. Then 
\[
h_2(x,y) = (ax+by)^{2^{t}}(cx+dy)^{2^{t}r_1} 
         = (ac^{r_1}x^{r_1+1}+c^{r_1}bx^{r_1}y+ad^{r_1}xy^{r_1}+bd^{r_1}y^{r_1+1})^{2^t}
\]
 for some $a,b,c,d \in \M$. Since either $L_2=L_3=0$ or $L_1=L_4=0$ we then have either $L_4(h_2(x,y))=g_2(x,y)$ (in the first case) or $L_2(h_2(x,y))=g_1(x,y)$ (in the second case). If $L_2$ or $L_4$, respectively, is a monomial $ex^{2^{t'-t}}$, we can write 
\[L_2(h_2(x,y))=e(a'c'^{r_1}x^{r_12^{t'}+2^{t'}}+c'^{r_1}b'x^{r_12^{t'}}y^{2^{t'}}+a'd'^{2^{t'}}x^{2^{t'}}y^{r_12^{t'}}+b'd'^{r_1}y^{r_12^{t'}+2^{t'}})\]
 (or equivalently for $L_4$).  We now compare the coefficients of $x^{r_12^{t'}+2^{t'}}$, $x^{r_12^{t'}}y^{2^{t'}}$, $x^{2^{t'}}y^{r_12^{t'}}$, $y^{r_12^{t'}+2^{t'}}$. We get either $t'=0$ and the necessary conditions
\begin{equation*}
	ea'c'^{r_1} = \alpha_z,\;\;
	eb'c'^{r_1} = \beta_z,\;\;
	ea'd'^{r_1} = \gamma_z,\;\;
	eb'd'^{r_1} = \delta_z.
\end{equation*}
or $2^{t'}=\overline{r_1}$ and
\begin{equation*}
	ea'c'^{{r_1}} = \alpha_z,\;\;
	eb'c'^{{r_1}} = \gamma_z,\;\;
	ea'd'^{{r_1}} = \beta_z,\;\;
	eb'd'^{{r_1}} = \delta_z.
\end{equation*}
for $z=1$ or $z=2$. It is easy to see that these conditions cannot be satisfied if $\alpha_z=\beta_z=\delta_z=1$ and $\gamma_z=0$ or $\alpha_z=\gamma_z=\delta_z=1$ and $\beta_z=0$, which shows inequivalence to the family $\mathcal{F}_1$. Similarly, the conditions cannot be satisfied for $\alpha_z,\delta_z \neq 0$ and $\beta_z=\gamma_z=0$ or $\alpha_z=\delta_z=0$, $\beta_z,\gamma_z\neq 0$, which shows inequivalence to the family $\mathcal{F}_4$.
\end{proof}
Note that we exclude the cases $(q,m)=(2,4)$ for Family $\mathcal{F}_1$ and the Taniguchi family since the conditions of Theorem~\ref{thm:projequiv} are not satisfied.

\section*{Acknowledgements}
{We would like to thank the anonymous reviewers for their comments which improved the presentation of the results.}

\bibliographystyle{alphaurl}

\bibliography{apn_inequiv}

\appendix

\section{The centralizer condition}\label{sec_centralizer}
We now determine the centralizers to check condition \eqref{eq:condition} in Theorem~\ref{thm:projequiv}. This is not very difficult, but quite technical, since we will compute the centralizers for many different families of functions.

\begin{lemma} \label{lem:centralizers}
	Let $m>2$, $m \neq 6$ and let $F$ be a $(q=2^k,r=2^l)$-biprojective polynomial pair with $l \notin \{0,m/2\}$, $k \not\equiv \pm l \pmod m$ and $C_F$ the centralizer of $Z_P^{(q,r)}$ in $\Aut_{\EL}(F)$. Then
	\begin{itemize}
		\item If $F$ is a Zhou-Pott function with $j\neq 0$ or contained in the family $\mathcal{F}_4$, then $C_F = \bigcup_{\omega\in \F_4^\times}Z_\omega ^{(q,r)} $, where we define
		\[Z_\omega ^{(q,r)} = \{\diag(m_a,m_{a\omega},m_{a^{q+1}},m_{a^{r+1}\omega^{r}})\colon a \in\M^\times\}\]
		for $\omega \in \F_4^\times$. Observe that $Z_1 ^{(q,r)} = Z ^{(q,r)}$.
		\item If $F$ is a Taniguchi function, then $C_F = Z^{(q,r)}$.
		\item If $F$ is a function in the families $\mathcal{F}_1$ or $\mathcal{F}_2$, then $C_F = Z^{(q,r)}\cup A \cup B$, where
        \begin{align*}
		        A &= \bigg\{\begin{pmatrix}
			m_a & m_a & 0 & 0 \\
			m_a & 0 & 0 & 0 \\
			0 & 0 & m_{a^{q+1}} & 0 \\
			0 & 0 & 0 & m_{a^{r+1}}
		\end{pmatrix} \colon a \in \M^\times \bigg\}, \\ B &= \bigg\{\begin{pmatrix}
			0 & m_a & 0 & 0 \\
			m_a & m_a & 0 & 0 \\
			0 & 0 & m_{a^{q+1}} & 0 \\
			0 & 0 & 0 & m_{a^{r+1}}
		\end{pmatrix}\colon a \in \M^\times \bigg\}. 
        \end{align*}

		\item If $F$ is a Carlet function for odd $m$ then $C_F$ contains $Z ^{(q,r)}$ as an index $3$ subgroup.
	\end{itemize}
	In all these cases, $C_F=Z^{(q,r)}$ or $Z^{(q,r)}$ is an index $3$ subgroup of $C_F$. In particular, $p$ does not divide the index and condition \eqref{eq:condition} is satisfied for all families.

\end{lemma}
\begin{proof}
Assume $\lambda  = \begin{pmatrix}
	M & 0 \\
	N & L
\end{pmatrix}\in C_F$. Then by Eq.~\eqref{eq:conjugation}, $M$ is in the centralizer of $$\{\diag(m_a,m_a)\colon a \in P\}$$ in $\GL(\F)$. 
Thus, by Lemma~\ref{lem:yoshiara}, all possible mappings that are contained in $C_F$ satisfy
$M =\begin{pmatrix}
	m_{c_1} & m_{c_2} \\
	m_{c_3} & m_{c_4}
\end{pmatrix}$, i.e., we can write $M = \begin{pmatrix}
	M_1 & M_2 \\
	M_3 & M_4
\end{pmatrix}$ where the mappings $M_1,\dots,M_4$ are monomials $M_i= c_ix$. Since $\lambda  \in \Aut_{\EL}(F)$, we can proceed identically to the proof of Theorem~\ref{thm:projequiv} with $t=0$, and we infer $N=0$ and $L_1=d_1x$, $L_2=L_3=0$, $L_4=d_4x$, except when $k\equiv 0 \pmod m$, in which case $N$ may be nonzero. We check which of these mappings are contained in $\Aut_{\EL}(F)$. 

Let $F=[(\alpha_1,\beta_1,\gamma_1,\delta_1)_{q},(\alpha_2,\beta_2,\gamma_2,\delta_2)_{r}]$ be a biprojective polynomial pair. Spelling out the equation $F \circ M = L \circ F+N$ yields  
\begin{align}
	&\alpha_1((c_1x+c_2y)^{q+1})+\beta_1(c_1x+c_2y)^{q}(c_3x+c_4y)+\nonumber \\ &\gamma_1(c_1x+c_2y)(c_3x+c_4y)^{q}+\delta_1(c_3x+c_4y)^{q+1} 
	= d_1(\alpha_1x^{q+1}+\beta_1x^{q}y+\gamma_1xy^{q}+\delta_1y^{q+1})\label{eq:cond1} \\
	&\alpha_2((c_1x+c_2y)^{r+1})+\beta_2(c_1x+c_2y)^{r}(c_3x+c_4y)+\nonumber\\ &\gamma_2(c_1x+c_2y)(c_3x+c_4y)^{r}+\delta_2(c_3x+c_4y)^{r+1}
	= d_4(\alpha_2x^{r+1}+\beta_2x^{r}y+\gamma_2xy^{r}+\delta_2y^{r+1}),\label{eq:cond2}
\end{align}
with added linear function $N_1(x)+N_2(y)$ in Eq.~\eqref{eq:cond1} if $k\equiv 0 \pmod m$. 
We now consider Eqs.~\eqref{eq:cond1} and~\eqref{eq:cond2} for various cases of $F$. 

\textbf{Zhou-Pott functions}: Let $F$ be a Zhou-Pott function. Eqs.~\eqref{eq:cond1} and~\eqref{eq:cond2} yield
\begin{align}
(c_1x+c_2y)^{q+1}+d(c_3x+c_4y)^{q+1} &= d_1(x^{q+1}+dy^{q+1}) \label{eq:condZP1}\\
	(c_1x+c_2y)(c_3x+c_4y)^{r}&= d_4xy^{r}.\label{eq:condZP2}
\end{align}
Comparing the coefficients of these polynomials leads to several defining equations. We start with Eq.~\eqref{eq:condZP2}
\begin{equation*}
	c_1c_3^{r} = 0,  \;\;
	c_2c_3^{r} = 0,  \;\;
		c_1c_4^{r} = d_4, \;\;
	c_2c_4^{r} = 0.
\end{equation*}
 $d_4$ is nonzero, so these equations imply $c_2=c_3=0$ since $c_1$ and $c_4$ are necessarily nonzero. Substituting this in Eq.~\eqref{eq:condZP1} and comparing the coefficients yields
\begin{equation*}
	c_1^{q+1}= d_1, \;\;
	dc_4^{q+1} = d\cdot d_1. 
\end{equation*}
In total, we thus have $c_4^{q+1}=c_1^{q+1}=d_1$ and $c_1c_4^{r}=d_4$. Since $\Gcd{k}{m}=1$ and $m$ is even, the mapping $x \mapsto x^{q+1}$ is $3$-to-$1$ on $\M^\times$, so we conclude that $c_4 = \omega c_1$ with $\omega \in \F_4^\times$. Further $c_1c_4^{r}=c_1^{r+1}\omega^{r}=d_4$. 

\textbf{Taniguchi functions}: Outside of the special case $(k,m)=(1,4)$ (where we have $2k=m/2$), Eqs.~\eqref{eq:cond1} and~\eqref{eq:cond2} yield
\begin{align}
(c_1x+c_2y)^{q+1}+(c_1x+c_2y)(c_3x+c_4y)^{q}+d(c_3x+c_4y)^{q+1} &= d_1(x^{q+1}+xy^{q}+dy^{q+1}) \label{eq:condT1}\\
	(c_1x+c_2y)^{q^2}(c_3x+c_4y)&= d_4x^{q^2}y\label{eq:condT2}.
\end{align}
Comparing the coefficients again yields for Eq.~\eqref{eq:condT2}:
\begin{equation*}
	c_1^{q^2}c_3 = 0,  \;\;
	c_1^{q^2}c_4 = d_4,  \;\;	
	c_2^{q^2}c_3 = 0,  \;\;
	c_2^{q^2}c_4 = 0.
\end{equation*}
This again leads to $c_2=c_3=0$. Substituting this in Eq.~\eqref{eq:condT1} and comparing the coefficients now gives
\begin{equation*}
	c_1^{q+1}= d_1 ,  \;\;
	dc_4^{q+1} = d\cdot d_1,  \;\;
	c_1c_4^{q} =d_1.
\end{equation*}
Comparing the first and the third equation directly yields $c_1=c_4$, and $d_1=c_1^{q+1}$, $d_4=c_1^{q^2+1}$ immediately follow. 

\textbf{Family $\mathcal{F}_1$:}
Eqs.~\eqref{eq:cond1} and~\eqref{eq:cond2} yield similarly to the previous cases the conditions 

 \noindent\begin{minipage}{0.45\textwidth}
\begin{align}
	c_1^{q+1}+c_1c_3^{q}+c_3^{q+1}&= d_1, \label{eq:l1}\\
	c_1^{q}c_2+c_3^{q}c_2+c_3^{q}c_4 &= 0,\label{eq:l2}\\
	c_1c_2^{q}+c_1c_4^{q}+c_3c_4^{q} &=d_1,\label{eq:l3}\\
	c_2^{q+1}+c_2c_4^{q}+c_4^{q+1} &= d_1,\label{eq:l4}
	\end{align}
    \end{minipage}%
    \begin{minipage}{0.1\textwidth}\centering
    and 
    \end{minipage}%
    \begin{minipage}{0.45\textwidth}
	\begin{align}
		c_1^{q^2+1}+c_1^{q^2}c_3+c_3^{q^2+1} &=d_4,\label{eq:r1}\\
	c_1^{q^2}c_2+c_1^{q^2}c_4+c_3^{q^2}c_4 &=d_4,\label{eq:r2}\\
c_1c_2^{q^2}+c_2^{q^2}c_3+c_3c_4^{q^2} &=0,\label{eq:r3}\\
	 c_2^{q^2+1}+c_2^{q^2}c_4+c_4^{q^2+1} &=d_4.\label{eq:r4}
	\end{align}
    \end{minipage}\vskip1em
\underline{Case $c_1=c_3$:} \\
Note that because of Eq.~\eqref{eq:l1}, we have $d_1 =c_1^{q+1}$ and in particular $c_1=c_3\neq 0$. Eq.~\eqref{eq:l2} then shows $c_4=0$.  Eq.~\eqref{eq:r1} gives $d_4 = c_1^{q^2+1}$ and then Eq.~\eqref{eq:r2} yields $c_2=c_1$. It is then easy to verify that all other equations are satisfied, so this case yields the set $A$ defined in the theorem. \\
\underline{Case $c_1c_3=0$:} \\
Similar calculations to the one in the previous case lead to the conclusion that the only valid solutions are the ones given by the sets $B$ (for $c_1=0$) and $Z^{(q,r)}$ (for $c_3=0$) as defined in the statement of the theorem.\\
\underline{Case $c_1,c_3\neq 0$, $c_1 \neq c_3$:}\\
From Eq.~\eqref{eq:l2} and Eq.~\eqref{eq:r3}, we get
\begin{equation*}
c_2^{q^2} = \frac{c_3^{q^3}c_4^{q^2}}{(c_1+c_3)^{q^3}} = \frac{c_3c_4^{q^2}}{(c_1+c_3)},
\end{equation*}
which directly gives $c_3^{q^3}(c_1+c_3) = c_3(c_1+c_3)^{q^3}$ and (simplifying further) $c_3^{q^3}c_1=c_1^{q^3}c_3$ and finally $c_3^{q^3-1} = c_1^{q^3-1}$. Since $\Gcd{3k}{m}=1$, we have $\Gcd{q^3-1}{2^m-1}=1$, so the mapping $x \mapsto x^{q^3-1}$ is a bijection on $\M$. We conclude $c_1=c_3$, which contradicts our assumption. 

\textbf{Family $\mathcal{F}_2$}: The calculations are very similar to the $\mathcal{F}_1$ case and can be carried out on the same way with the same end result.

\textbf{The family $\mathcal{F}_4$}: 

Eqs.~\eqref{eq:cond1} and~\eqref{eq:cond2} yield the conditions:\\
 \noindent\begin{minipage}{0.45\textwidth}
\begin{align*}
	c_1^{q+1}+Bc_3^{q+1} &= d_1 \\
	c_1^{q}c_2+Bc_3^{q}c_4 &= 0\\	
	c_1c_2^{q}+Bc_3c_4^{q} &= 0\\	
	c_2^{q+1}+Bc_4^{q+1} &= d_1B.
\end{align*}
    \end{minipage}%
    \begin{minipage}{0.1\textwidth}\centering
    and 
    \end{minipage}%
    \begin{minipage}{0.45\textwidth}
	\begin{align*}
		c_1^{qQ}c_3+a/Bc_1c_3^{qQ} &= 0 \\
	c_1^{qQ}c_4+a/Bc_2c_3^{qQ} &= d_4\\	
	c_2^{qQ}c_3+a/Bc_1c_4^{qQ} &= a/Bd_4\\	
	c_2^{qQ}c_4+a/Bc_2c_4^{qQ} &= 0.
	\end{align*}
    \end{minipage}\vskip1em
Let us consider the case $c_1c_2c_3c_4 \neq 0$ first, and set $c_1=z_3c_1$ and $c_2=z_4c_2$ with $z_1,z_2 \in \M^\times$. The second and third equations on the left then yield $B = z_1^qz_2 = z_1z_2^q$, which implies $z_1^{q-1}=z_2^{q-1}$ and since $\Gcd{q-1}{2^m-1}=1$, we conclude $z_1=z_2$. Then $B=z_1^{q+1}$, in particular $B$ is a cube since $\Gcd{q+1}{2^m-1}=3$. So we arrive at a contradiction and $c_1c_2c_3c_4=0$. By the second and third equation on the left and the bijectivity of $M$, we infer that there are only two cases to check, $c_1=c_4=0$ and $c_2=c_3=0$.

\underline{Case $c_2=c_3=0$:} 
The second and third equation on the right yield $d_4=c_1^{qQ}c_4=c_1c_4^{qQ}$, so $c_1^{qQ-1} = c_4^{qQ-1}$ and $c_4=\omega c_1$ with $\omega \in \F_{4}^\times$ since $\Gcd{qQ-1}{2^m-1}=3$ by Lemma~\ref{lem:basics}. Similarly, the first and fourth equations on the left yield $d_1=c_1^{q+1}=c_4^{q+1}$ (observe that $\omega^{q+1}=1$). It is then easy to verify that for this choice of coefficients, all equations hold. So this case gives precisely the sets $Z_\omega ^{(q,r)}$ for $\omega \in \F_4^\times$. \\
\underline{Case $c_1=c_4=0$:} 
The first and fourth equations on the left give $d_1=Bc_3^{q+1}=c_2^{q+1}/B$, so $c_2^{q+1} = B^2c_3^{q+1}$. In particular, since $\Gcd{q+1}{2^m-1}=3$, we know that both $c_3^{q+1}$ and $c_2^{q+1}$ are cubes, but $B^2$ is not a cube since $B$ is a non-cube. So this case cannot occur. 

\textbf{The Carlet family:} Let $F =(xy,(1,\beta,\gamma,\delta)_{r})$ be a Carlet function. 
Eqs.~\eqref{eq:cond1} and~\eqref{eq:cond2} then yield
\begin{align}
	&(c_1x+c_2y)(c_3x+c_4y)= d_4xy+N_1(x)+N_2(y),\label{eq:cond2_carlet}\\
	&(c_1x+c_2y)^{r+1}+\beta(c_1x+c_2y)^{r}(c_3x+c_4y)+\gamma(c_1x+c_2y)(c_3x+c_4y)^{r}+\delta(c_3x+c_4y)^{r+1} \nonumber \\
	&= d_1( x^{r+1}+\beta x^ry+\gamma xy^r+\delta y^{r+1})\label{eq:cond1_carlet} 
\end{align}
	Observe that Eq.~\eqref{eq:cond2_carlet} can always be satisfied for given $c_1,c_2,c_3,c_4$ by the (unique) choice $d_4 = c_1c_4+c_2c_3=\det(M)\neq 0$, $N_1(x) = c_1c_3x^2$, $N_2(y)=c_2c_4y^2$. We thus only have to care about Eq.~\eqref{eq:cond1_carlet}. This equation is satisfied if and only if $(d_1,M)$ is in the stabilizer of $f(x,y)=(1,\beta,\gamma,\delta)_{r}$ under the action of $G$ (see Section~\ref{sec:carlet}). The size of this stabilizer is precisely $3(2^m-1)$ by Lemma~\ref{lem:transitiv}.
\end{proof}

\section{The Carlet family $\mathcal{C}$} \label{sec:carlet}
The case of $(1,r)$-biprojective polynomial pairs  requires some extra considerations since in this case the question of equivalence cannot be reduced to linear equivalence (see Theorem~\ref{thm:projequiv}) . The APN functions of this type are precisely the ones in the Carlet family $\mathcal{C}$ (see Table~\ref{table_biproj}). We will consider the special properties of this family in this section.

The following lemma can be easily deduced from a classical result by Bluher~\cite{bluher}, as is done for example in~\cite[Lemma 4]{bbsemifieldchar2}. 
\begin{lemma}[{\cite[Lemma 4]{bbsemifieldchar2}}]\label{lem:noroots}
	Let $q=2^k$ with $\Gcd{k}{m}=1$. The number of polynomials $f(x,y) = (p_1,p_2,p_3,p_4)_{q}$ with $p_1,p_2,p_3,p_4 \in \M$ such that $p_1\neq 0$ and $f(x,1)$ has no roots in $\M$ is precisely $\frac{(2^m+1)2^m(2^m-1)^2}{3}$. 
\end{lemma}
Note that for $f(x,y) = (p_1,p_2,p_3,p_4)_{q}$ with $p_1 \neq 0$ we have that $f(x,1)$ has no roots if and only if $f(x,y)=0$ holds only for $x=y=0$. Indeed, we have $f(x,y)=y^{q+1}f(x/y,1)$ for $y \neq 0$ and $f(x,0)=p_1x^{q+1}=0$ if and only if $x=0$.

Similarly to~\cite{bbsemifieldchar2}, we define a group action by $G =  \M^\times \times \GL(2,\M)$ on the set of biprojective polynomial pairs $f=(p_1,p_2,p_3,p_4)_q$ with $p_1 \neq 0$  as follows: $M=\begin{pmatrix}
	c_1 & c_3 \\
	c_2 & c_4
\end{pmatrix} \in\GL(2,\M)$ acts on a biprojective polynomial pair $f(x,y)=(p_1,p_2,p_3,p_4)_{q}$ via

\[Mf =\begin{pmatrix}
	x & y 
\end{pmatrix} M \begin{pmatrix}
	p_1 & p_3 \\
	p_2 & p_4
\end{pmatrix} (M^{q})^t \begin{pmatrix}
	x^{q} \\
	y^{q}
\end{pmatrix}
\]
where $M^t$ denotes as usual the transpose of $M$ and $M^{q}$ the matrix where every entry is taken to the ${q}$-th power. A straightforward calculation shows $Mf(x,y) = f(c_1x+c_2y,c_3x+c_4y)$. $\M^\times$ acts on $f$ by regular multiplication, i.e., for $a \in \M^\times$ we have $af =  (ap_1,ap_2,ap_3,ap_4)_{q}$. Note that $|G|=(2^m-1)\cdot |\GL(2,\M)|=(2^m-1)^3(2^m+1)2^m$. Further, if $f(x,1)$ has no roots in $\M$ then $((a,M)f)(x,1)$ also has no roots in $\M$. This is obvious in the case of the scalar multiplication, and for the matrix action we have $Mf(x,1)=f(c_1x+c_2,c_3x+c_4)=0$ if and only if $c_2=c_1x$ and $c_4=c_3x$, which leads to $\det(M)=c_1c_4+c_2c_3=xc_1c_3+xc_1c_3=0$. Moreover, $(a,M)f(x,y)=(p_1',p_2',p_3',p_4')_q$ for some values $p_1',p_2',p_3',p_4'$ with $p_1' \neq 0$. This is again obvious for the scalar multiplication, and for the matrix action, the coefficient of the $x^{q+1}$-term is (by simply expanding $f(c_1x+c_2y,c_3x+c_4y)$): 
\[p_1c_1^{q+1}+p_2c_1^qc_3+p_3c_1c_3^q+p_4c_3^{q+1}=f(c_1,c_3) \neq 0\]
since $(c_1,c_3)\neq (0,0)$. Thus we can view the action of $G$ purely on the set of projective polynomials $f=(p_1,p_2,p_3,p_4)_q$ with $p_1 \neq 0$ and $f(x,1)\neq 0$ for all $x \in \M$. 

 We have the following central result about this action:
\begin{lemma}\label{lem:transitiv}
	Let $q=2^k$ be fixed with $\Gcd{k}{m}=1$. Then $G$ acts transitively on the set of polynomials $f=(p_1,p_2,p_3,p_4)_{q}$ with $p_1 \neq 0$ such that $f(x,1)$ has no roots in $\M$, i.e., all such polynomials are in the same orbit under $G$. The size of the stabilizer of any polynomial in this set is $3(2^m-1)$.
\end{lemma}
\begin{proof}
	Consider first the case that $m$ is even. Set $f(x,y)=x^{q+1}+uy^{q+1}=(1,0,0,u)_q$ for a non-cube $u \in \M^\times$, and observe that $f(x,1)$ has no roots in $\M$.
	By~\cite[Lemma 6]{bbsemifieldchar2}, the stabilizer of $f$ under $G$ has order $3(2^m-1)$. By the orbit-stabilizer theorem, the size of the orbit is then 
	\[|G|/(3\cdot (2^m-1)) = 2^m(2^m-1)^2(2^m+1)/3,\]
	so the entire set of polynomials with no roots by Lemma~\ref{lem:noroots}.
	
	Now let $m$ be odd and consider $f(x,y)=(1,0,1,u)_{q}$ with $u\in \M^\times$ such that $f(x,1)$ has no roots in $\M$ (such an $f$ exists for any $q,m$ by results of Bluher~\cite{bluher}). An element $(a,M) \in G$ acts on $f$ by definition as follows
	\[(a,M)f = \begin{pmatrix}
	x & y 
\end{pmatrix} aM \begin{pmatrix}
	1 & 1 \\
	0 & u
\end{pmatrix} (M^{q})^t \begin{pmatrix}
	x^q \\
	y^q
\end{pmatrix}\]
Now assume that $(a,M)$ is in the stabilizer of $f$ and set $M=\begin{pmatrix}
	c_1 & c_3 \\
	c_2 & c_4
\end{pmatrix}$. Then 

\[aM \begin{pmatrix}
	1 & 1 \\
	0 & u
\end{pmatrix} (M^{q})^t=\begin{pmatrix}
	1 & 1 \\
	0 & u
\end{pmatrix}.\] This implies that $a^2\det(M)^{q+1}=1$. Since $m$ is odd, the mapping $x \mapsto x^{q+1}$ permutes $\M$ and we can find a $\gamma \in \M$ such that $\gamma^{q+1}=a$. It is enough to consider the case $a=\det(M)=1$, all other cases for $a$ are covered by scaling $M$ by $\gamma$. So let $a=\det(M)=1$. In this case $((M^{q})^t)^{-1}=\begin{pmatrix}
	c_4^q &c_2^q \\
	c_3^q & c_1^q
\end{pmatrix}$. The condition 
\[ \begin{pmatrix}
	c_1 & c_3 \\
	c_2 & c_4
\end{pmatrix}\begin{pmatrix}
	1 & 1 \\
	0 & u
\end{pmatrix} =\begin{pmatrix}
	1 & 1 \\
	0 & u
\end{pmatrix}\begin{pmatrix}
	c_4^{q} &c_2^q \\
	c_3^q & c_1^q
\end{pmatrix}\] yields one equation per entry in the matrix:
\begin{align}
	c_1 &= c_4^q+c_3^q\label{eq:c1}  \\
	c_1+uc_3 &= c_2^q+c_1^q \nonumber\\
	c_2  &= uc_3^q \label{eq:c2} \\
	c_2+uc_4 &= uc_1^q\nonumber.
\end{align}
 Observe that all possible solutions of these equations yield an invertible matrix $M$ as long as $(c_3,c_4)\neq (0,0)$: Indeed, we have using Eqs.~\eqref{eq:c1} and~\eqref{eq:c2} $\det(M)=c_1c_4+c_2c_3 = c_4^{q+1}+c_3^qc_4+uc_3^{q+1}$ and $\det(M)\neq 0 $ no matter the choice of $(c_3,c_4)\neq (0,0)$ since $\det(M)\neq 0$ for $c_3 =0$, $c_4 \neq 0$ and if $c_3 \neq 0$, we substitute $c_4 \mapsto c_3c_4$ which gives $c_3^{q+1}c_4^{q+1}+c_3^{q+1}c_4+uc_3^{q+1}=c_3^{q+1}f(c_4,1)$ which is never $0$ since $f(x,1)$ has no root in $\M$.
After eliminating $c_1,c_2$ using Eqs.~\eqref{eq:c1} and~\eqref{eq:c2}, we get (after slightly reordering the equations)
\begin{align}
	c_4^q+ c_4^{q^2} = u^qc_3^{q^2} +c_3^{q^2}+c_3^q+uc_3 \label{eq:carletequiv_oben} \\
	uc_4 +uc_4^{q^2}= uc_3^q+uc_3^{q^2}. \label{eq:carletequiv}
\end{align}
	Dividing the second equation by $u$ and then adding the two equations yields
	\[c_4+c_4^q = uc_3+u^qc_3^{q^2}.\]
	Taking this equation to the power $q$ gives
	\[c_4^q+c_4^{q^2} = u^qc_3^q+u^{q^2}c_3^{q^3}.\]
	A comparison with Eq.~\eqref{eq:carletequiv_oben} yields
	\[u^qc_3^{q^2} +c_3^{q^2}+c_3^q+uc_3= u^qc_3^q+u^{q^2}c_3^{q^3}\]
	and, after slightly reordering, $u^{q}c_3^{q^2}+c_3^{q}+uc_3 = (u^{q}c_3^{q^2}+c_3^{q}+uc_3)^{q}$, which implies $u^{q}c_3^{q^2}+c_3^{q}+uc_3 \in \F_{2}$. Observe that $g(x):=u^{q}x^{q^2}+x^q+ux = xg'(x^{q-1})$ where $g'(x) = u^{q}x^{q+1}+x+u$. We show that $g(x)$ permutes $\M$, i.e., $\ker(g)=\{0\}$. This is clearly the case if $g'$ has no root in $\M$. Assume to the contrary $g'(x)=u^{q}x^{q+1}+x+u=0$. Substituting $x \mapsto x/u$ and then multiplying the entire equation by $u$ yields $x^{q+1}+x+u^2=0$. But $f(x,1)=x^{q+1}+x+u$ has no root in $\M$, so $x^{q+1}+x+u^2=0$ has also no solution in $\M$. We conclude that $g'$ has no roots in $\M$. Thus $g$ has trivial kernel, and permutes $\M$. In particular, $g(c_3)=0$ if and only if $c_3=0$ and $g(c_3)=1$ only for one value, say $c_3'$. For $c_3=0$, Eq.~\eqref{eq:carletequiv_oben} yields $c_4=1$ (recall that $x \mapsto x+x^q$ is a $2$-to-$1$ mapping since $\Gcd{k}{m}=1$ and $c_4=0$ is not possible since $M$ is invertible), and Eqs.~\eqref{eq:c1}, \eqref{eq:c2} then give $c_1=1$, $c_2=0$. This solution thus yields precisely the ``trivial'' case where $M$ is the identity matrix. For the non-zero solution $c_3'$, we know that there are either $2$ or $0$ possible values for $c_4$ by Eqs.~\eqref{eq:carletequiv} and \eqref{eq:carletequiv_oben}, again since $x \mapsto c_4 +c_4^{q}$ is a $2$-to-$1$ mapping. If we find admissible values for $c_4$, the coefficients $c_1,c_2$ are uniquely determined by Eqs.~\eqref{eq:c1}, \eqref{eq:c2}. We thus have $1$ or $3$ solutions for $a=\det(M)=1$. As mentioned above, other values of $a$ are covered by simply scaling $M$ by $\gamma$ where $\gamma^{q+1}=a$. In total, the stabilizer of $f$ under $G$ has thus size $K(2^m-1)$ where $K\in \{1,3\}$. By the orbit-stabilizer theorem, the orbit of $f$ has then size $(2^m-1)^2(2^m+1)2^m/K$. Since the orbit can only contain functions $f$ such that $f(x,1)$ has no roots in $\M$, Lemma~\ref{lem:noroots} implies that we must have $K=3$ and $G$ acts transitively on the set of all biprojective polynomials $f$ such that $f(x,1)$ has no roots in $\M$.
\end{proof}

With this result we can show that, if $m$ is even, all Carlet functions are EA-equivalent to Zhou-Pott functions, i.e., a function in the Family $\mathcal{ZP}$ listed in Table~\ref{table_biproj}.

\begin{theorem} \label{thm:carlet_1}
	Let $m$ be even. All Carlet functions on $\F$ are EA-equivalent to a Zhou-Pott function.
\end{theorem}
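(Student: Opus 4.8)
The plan is to exploit Lemma~\ref{lem:transitiv} to reduce an arbitrary Carlet function to a normal form that is visibly (EA-equivalent to) a Zhou-Pott function. Recall a Carlet function has the shape $C(x,y) = [xy, f(x,y)]$ where $f = (1,b,c,d)_q$ has $f(x,1)$ with no roots in $\M$, and $q = 2^k$ with $\Gcd{k}{m}=1$, $0<k<m$. A Zhou-Pott function has the form $[(1,0,0,d')_{q'}, (0,0,1,0)_{r'}]$; in particular its second component is $(1,0,0,d')_{q'}$ with $d'$ a suitably chosen non-cube (to guarantee APN). So I want to show: there is an EA-map taking $C$ to something whose first component is $xy = (0,1,1,0)_1$-type (a $2$-to-$1$ bilinear form) and whose second component is $q$-biprojective with no roots, and then use the transitivity to normalize that second component.

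First I would record that the first component $xy$ of the Carlet function and the term $x^{r}y$ (or $xy^{r}$) of the Zhou–Pott second component are, up to EA-equivalence, interchangeable: applying the linear map $(x,y)\mapsto(x,y^{\overline r})$ (raising the second coordinate to an appropriate power of two, using $\Gcd{\cdot}{m}=1$ to keep it a bijection) turns $xy$ into $x y^{2^j}$ for any desired $j$, and similarly turns $(p_1,p_2,p_3,p_4)_q$ into another $q$-biprojective polynomial. So the roles of ``the bilinear component'' and ``the $q$-biprojective component'' in a biprojective pair are symmetric under EA-equivalence: $[xy, g]$ with $g = (1,\ast,\ast,\ast)_q$ is EA-equivalent to $[g', (0,0,1,0)_q]$-type where $g'$ is obtained from $g$ by a diagonal linear substitution (and vice versa). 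The key input is then Lemma~\ref{lem:transitiv}: since the first component of a Zhou–Pott function, written in the slot of the second component after the swap, is $(1,0,0,d')_q$ with $f(x,1)$ root-free, and the second component of the Carlet function is $(1,b,c,d)_q$ with $f(x,1)$ root-free, both lie in a single orbit of $G = \M^\times \times \GL(2,\M)$. The $G$-action is realized precisely by EA-maps on the biprojective pair: the $\GL(2,\M)$-part is the substitution $(x,y)\mapsto(c_1x+c_2y, c_3x+c_4y)$, which acts simultaneously on both components of the pair (so one must track what it does to the $xy$-component too), and the $\M^\times$-part is scalar multiplication, which is a linear map applied after $F$, i.e.\ the ``$L$'' part of an EA-map. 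Hence some EA-map carries the Carlet pair to a pair whose $q$-biprojective component is exactly $(1,0,0,d')_q$; after undoing the swap, this is a Zhou–Pott function, provided the resulting $d'$ is a legal Zhou-Pott parameter.

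The main subtlety—and where I expect the real work to be—is bookkeeping the $\GL(2,\M)$-action on \emph{both} coordinates of the biprojective pair simultaneously. Lemma~\ref{lem:transitiv} only asserts transitivity on the $q$-biprojective component; when we apply $M \in \GL(2,\M)$ to move $(1,b,c,d)_q$ to $(1,0,0,d')_q$, the same $M$ distorts the bilinear first component $xy$ into another symmetric bilinear form $(x,y)\mapsto (c_1x+c_2y)(c_3x+c_4y)$, which in general is $\alpha x^2 + \beta xy + \gamma y^2$. One must then clean this up: the square terms $x^2, y^2$ are $\F_2$-linear in $x,y$ (Frobenius), so $\alpha x^2 + \beta xy + \gamma y^2 = \beta xy + (\text{linear})$, and the linear part can be absorbed by the $N$-component of an EA-map (i.e.\ EA-, not merely EL-, equivalence is essential here, which is exactly why the theorem only claims EA-equivalence), while the coefficient $\beta$ is nonzero because $M$ is invertible and can be scaled away by a further diagonal linear map—this is also where one checks $\beta \neq 0$ and that the resulting constant is a genuine non-cube so the target is truly in family $\mathcal{ZP}$. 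Finally I would note that the substitution $(x,y)\mapsto(x,y^{2^j})$ used for the swap may change which Zhou-Pott parameter $(q',r')$ one lands in, but since $k$ and $m$ are coprime every $q' = 2^{k'}$ with $\Gcd{k'}{m}=1$ is reachable, so the conclusion that \emph{some} Zhou-Pott function is obtained holds. I would present the argument in that order: (1) the swap reduces $[xy,g]$ to $[g',(0,0,1,0)_q]$-normal form up to EA; (2) transitivity of $G$ normalizes $g'$ to $(1,0,0,d')_q$; (3) track the induced distortion of the other component and absorb squares/linear terms via the $N$-part and a diagonal scaling; (4) verify the target is a legitimate $\mathcal{ZP}$ member.
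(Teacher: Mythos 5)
Your core argument is exactly the paper's proof: apply an input substitution $M=\begin{pmatrix}c_1&c_2\\c_3&c_4\end{pmatrix}$, use the transitivity of the $G$-action (Lemma~\ref{lem:transitiv}, even $m$ case) to normalize the $q$-biprojective component to a scalar multiple of $(1,0,0,u)_q$ with $u$ a non-cube, observe that the same substitution turns $xy$ into $\det(M)\,xy$ plus square terms, absorb the squares into the $N$-part (which is precisely why only EA- and not EL-equivalence is claimed), and rescale with a diagonal $L$. That bookkeeping, which you correctly identify as the real content, is all the paper does.

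The one place your write-up goes wrong is the preliminary ``swap'' step. The claim that $(x,y)\mapsto(x,y^{2^j})$ turns $xy$ into $xy^{2^j}$ \emph{and} sends a $q$-biprojective polynomial to another $q$-biprojective polynomial is false: substituting $y\mapsto y^{2^j}$ into $(p_1,p_2,p_3,p_4)_q$ produces terms $x^qy^{2^j}$, $xy^{q2^j}$, $y^{(q+1)2^j}$, which no longer have the required shape $ax^{q+1}+bx^qy+cxy^q+dy^{q+1}$. Fortunately this step is superfluous: $xy=(0,0,1,0)_1$ already is the second component of a Zhou--Pott pair (with $r=1$), so after normalizing the other component to $(1,0,0,u)_q$ one only needs to permute the two output coordinates (part of $L$) to land on $[(1,0,0,u)_q,(0,0,1,0)_1]$; no Frobenius twist of a single input coordinate is needed, and the non-cube condition on $u$ is exactly the Zhou--Pott membership condition in this case. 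Deleting the swap step and keeping your steps (2)--(4) gives a correct proof identical to the paper's.
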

\begin{proof}
	Let $C=(f(x,y),g(x,y))=(xy,(1,\beta,\gamma,\delta)_{q})$ be a Carlet function. We have $C(c_1x+c_2y,c_3x+c_4y)=(f(c_1x+c_2y,c_3x+c_4y),g(c_1x+c_2y,c_3x+c_4y))$. Note that 
	\[f(c_1x+c_2y,c_3x+c_4y)=c_1c_3x^2+(c_1c_4+c_2c_3)xy+c_4y^2,\]
	in particular it can be written as $N_1(x)+N_2(y)+d_1 xy$ where $N_1,N_2$ are linear functions and $d_1 \in \M^\times$ no matter the choice of $c_1,c_2,c_3,c_4$. By Lemma~\ref{lem:transitiv}, we can choose $c_1,c_2,c_3,c_4$ with $c_1c_4+c_2c_3 \neq 0$ in a way such that $g(c_1x+c_2y,c_3x+c_4y) = d_2 (1,0,0,u)_{q}$ for an arbitrary non-cube $u$ and some $d_2 \in \M^\times$. We conclude that $L\circ C \circ M +N$ is a Zhou-Pott function, when we set $L=\diag(m_{1/d_1},m_{1/d_2})$, $M=\begin{pmatrix}
		m_{c_1} & m_{c_2} \\
		m_{c_3} & m_{c_4}
	\end{pmatrix}$ and $N = \begin{pmatrix}
		N_1 & N_2 \\
		0 & 0
	\end{pmatrix}$. 
\end{proof}

For the $m$ odd case, we will employ Theorem~\ref{thm:projequiv}. 


\begin{remark}
After proving Theorem~\ref{thm:carlet_1}, we found out that the result was 
recently published as part of the PhD thesis of Christian Kaspers~\cite{kaspers_phd}. 
The proof in~\cite{kaspers_phd} is different from ours in the sense that it relies on 
a result proved in~\cite[Theorem 2.1.]{brackentan} that characterizes (using our 
language) the orbits of the polynomials $x^{q+1}-u$ under $G$, where $u$ is a 
non-cube. In particular, it does not translate to the $m$ odd case, which is left 
as an open problem in~\cite{kaspers_phd,kasperszhouZP}. In that sense, 
Lemma~\ref{lem:transitiv} can be seen as a generalization 
of~\cite[Theorem 2.1.]{brackentan} to the $m$ odd case, solving this open problem. 
Lemma~\ref{lem:transitiv} allows us
in Section~\ref{sec:inside} to completely settle the equivalence question 
for Carlet functions also in the $m$ odd case, again closing a gap left 
in~\cite{kaspers_phd,kasperszhouZP}.
\end{remark}

\section{Walsh spectra of biprojective APN functions}\label{sec_walsh}

One of the most important properties of APN functions in even dimension is their Walsh spectrum. 
\begin{definition}
	Let $F \colon \F \rightarrow \F$ be a mapping. We define
	\[W_F(b,a) = \sum_{x \in \F}(-1)^{\trace{bF(x)+ax}} \in \Z\]
	for all $a,b \in \F$. We call the multisets
	\[\lms  W_F(b,a) \colon b \in \F^\times, a\in \F \rms \text{ and }
	  \lms |W_F(b,a)|\colon b \in \F^\times, a\in \F \rms \]
	the \emph{Walsh spectrum} and the \emph{extended Walsh spectrum} of $F$, respectively.
\end{definition} 

The extended Walsh spectrum is invariant under $\CCZ$-equivalence. Most known APN functions in even dimension $n$ have the so called \emph{classical} (or Gold-like) extended Walsh spectrum, which contains the values $0,2^{n/2},2^{(n+2)/2}$ precisely $(2^n-1)2^{n-2}$ times, $(2^n-1)2^{n+1}/3$ times and $(2^n-1)2^n/3$ times, respectively. 

We will now show that all functions in the Family $\mathcal{F}_4$ have classical Walsh spectrum. This is already known for the Taniguchi, Zhou-Pott, Carlet functions as well as the functions from $\mathcal{F}_1$ and $\mathcal{F}_2$~\cite{walsh_ZP,walshspectra,kkk}, so all known infinite families of biprojective APN functions known so far share the same Walsh spectrum. For our proof, we will employ a criterion from~\cite{kkk}. In fact, we will show that all functions in the Family $\mathcal{F}_4$ are $3$-to-$1$ functions. APN $3$-to-$1$ functions are particularly interesting since they have the smallest possible image set for APN functions, which is the reason they were studied in detail in~\cite{kkk}. There, it was also shown that all quadratic (or, more generally, all plateaued) $3$-to-$1$ APN functions have the classical Walsh spectrum, which allowed simple proofs of the Walsh spectra of (among others) the Zhou-Pott functions and the functions from Families $\mathcal{F}_1$ and $\mathcal{F}_2$. The criterion states:

\begin{theorem}[\cite{kkk}] \label{prop:walsh}
	Let $n$ be even and $F \colon \F_{2^n} \rightarrow \F_{2^n}$ be a quadratic APN function such that 
	\begin{itemize}
		\item $F(0)=0$, and
		\item Every $y \in \im(F)\setminus \{0\}$ has at least $3$ preimages. 
	\end{itemize}
	Then $F(x)=0$ if and only $x=0$ and every $y \in \im(F)\setminus \{0\}$ has precisely $3$ preimages (i.e., $F$ is $3$-to-$1$). Additionally, $F$ has classical Walsh spectrum.
\end{theorem}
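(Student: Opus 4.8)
The plan is to treat the two assertions separately: first the combinatorial claim that $F$ is $3$-to-$1$ with $F^{-1}(0)=\{0\}$, then the spectral claim. Throughout write $B_F(x,y)=F(x+y)+F(x)+F(y)$ for the associated symmetric bilinear form (using $F(0)=0$ and quadraticity), and for $a\neq0$ set $\Phi_a=B_F(a,\cdot)$. Since $D_aF(x)=\Phi_a(x)+F(a)$ is affine and $F$ is APN, every $\Phi_a$ has $1$-dimensional kernel $\{0,a\}$, so each derivative is $2$-to-$1$ onto an affine hyperplane. I record the collision count $C:=|\{(x,y):F(x)=F(y)\}|=\sum_{c}|F^{-1}(c)|^2$; summing over differences gives $C=2^n+\sum_{a\neq0}\delta_F(a,0)$ with $\delta_F(a,0)\in\{0,2\}$, hence $C\le 3\cdot2^n-2$.

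First I would prove the $3$-to-$1$ statement by counting and convexity. Let $k_0=|F^{-1}(0)|$ and let $j_1,\dots,j_m$ be the preimage sizes of the nonzero image values, so $k_0+\sum_t j_t=2^n$ and $j_t\ge3$ by hypothesis. Then $C=k_0^2+\sum_t j_t^2\ge k_0^2+3\sum_t j_t=k_0^2-3k_0+3\cdot2^n$, with equality iff every $j_t=3$. Comparing with $C\le3\cdot2^n-2$ forces $(k_0-1)(k_0-2)\le0$, so $k_0\in\{1,2\}$; since at both endpoints the lower bound already equals $3\cdot2^n-2$, the inequalities collapse and give $C=3\cdot2^n-2$ and $j_t=3$ for all $t$. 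Finally $k_0=2$ would give $3m=2^n-2$, impossible because $n$ even forces $2^n\equiv1\pmod3$; thus $k_0=1$, i.e. $F^{-1}(0)=\{0\}$ and $F$ is $3$-to-$1$ with $m=(2^n-1)/3$. As a by-product, $\delta_F(a,0)=2$ for every $a\neq0$.

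For the Walsh spectrum I would use that $F$ is plateaued: for each $b\neq0$ the component $\trace{bF(x)}$ is a quadratic Boolean function, so $W_F(b,a)\in\{0,\pm2^{(n+s_b)/2}\}$ for a single even exponent $s_b=\dim\rad(b)$, where $\rad(b)=\{a:\trace{bB_F(a,a')}=0\ \forall a'\}$. Classical spectrum is exactly the statement $s_b\in\{0,2\}$ for all $b\neq0$, in which case the relations $\#\{b\neq0\}=2^n-1$ together with the value computed next pin the multiplicities to $\frac23(2^n-1)$ bent components and $\frac13(2^n-1)$ components of amplitude $2^{(n+2)/2}$, reproducing the stated spectrum. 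The APN property supplies the second relation: evaluating $\sum_{b,a}W_F(b,a)^4=2^{2n}\cdot\#\{(x_1,\dots,x_4):\sum x_i=0,\ \sum F(x_i)=0\}$ and using that for an APN map this count equals $2^n(3\cdot2^n-2)$, one extracts (via the plateaued form of the fourth moment) $\sum_{b\neq0}2^{s_b}=2^{n+1}-2$.

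The hard part is ruling out amplitudes above $2^{(n+2)/2}$, i.e. $s_b\le2$ for every $b$: the fourth moment alone only fixes $\sum_{b\neq0}2^{s_b}$ and is satisfied by non-classical quadratic APN maps as well, so the $3$-to-$1$ hypothesis (equivalently $\delta_F(a,0)=2$ for all $a$) must genuinely enter here. The route I would take is structural: for $a\neq0$ the hyperplane $\im\Phi_a$ has a unique nonzero trace-orthogonal vector $\beta(a)$, the fibers of $\beta$ are precisely the punctured radicals $\rad(b)\setminus\{0\}$, and the identity $\delta_F(a,0)=2$ rewrites as $\trace{\beta(a)F(a)}=0$, i.e. each component $\trace{bF}$ vanishes on its own radical $\rad(b)$. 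I would then combine this vanishing with the APN-ness of the restriction $F|_{\rad(b)}$ and a higher-moment power-mean comparison to exclude $s_b\ge4$; this amplitude-pinning is the real obstacle and is exactly where the detailed plateaued-function analysis of~\cite{kkk} is required. Once $s_b\in\{0,2\}$ is secured, substituting into $\sum_{b\neq0}2^{s_b}=2^{n+1}-2$ yields the stated counts and hence the classical Walsh spectrum.
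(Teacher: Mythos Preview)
The paper does not give its own proof of this theorem: it is quoted verbatim as a result from~\cite{kkk} and then applied. So there is no paper proof to compare your argument against; I can only comment on the proposal on its own merits.

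Your first half, the $3$-to-$1$ claim, is a clean and complete proof. The collision-count bound $C\le 3\cdot 2^n-2$ from APN-ness, the convexity lower bound $C\ge k_0^2-3k_0+3\cdot 2^n$ from the hypothesis $j_t\ge 3$, and the divisibility argument excluding $k_0=2$ are all correct. This part stands on its own.

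The Walsh-spectrum half, however, has a genuine gap that you yourself flag. You correctly reduce the classical-spectrum claim to showing $s_b\le 2$ for every nonzero $b$, and you correctly observe that the fourth-moment identity $\sum_{b\neq 0}2^{s_b}=2^{n+1}-2$ alone cannot do this (non-classical quadratic APN maps satisfy it too). Your proposed route---rewriting $\delta_F(a,0)=2$ as $\trace{\beta(a)F(a)}=0$, noting the fibers of $\beta$ are the punctured radicals, and then invoking ``APN-ness of the restriction $F|_{\rad(b)}$'' plus a ``higher-moment power-mean comparison''---is only a sketch, and the crucial claim that this forces $s_b\le 2$ is never argued. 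In particular, $F|_{\rad(b)}$ being APN is not obvious and would need justification, and the unspecified power-mean comparison is doing all the work. Your final sentence essentially defers the whole step back to~\cite{kkk}, which is circular since that is the very source of the theorem. So as a self-contained proof the spectral assertion is not established; the argument as written proves the $3$-to-$1$ structure but only \emph{outlines} the spectral conclusion.
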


Using this result, determining the Walsh spectrum of the family is reduced to a simple verification.

\begin{theorem}
	All APN functions $F \colon \F_{2^n} \rightarrow \F_{2^n}$ with $n=2m$ from the Family $\mathcal{F}_4$
	are $3$-to-$1$ and have classical Walsh spectrum.
\end{theorem}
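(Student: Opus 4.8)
The plan is to apply the $3$-to-$1$ criterion of Theorem~\ref{prop:walsh}, so that the whole statement reduces to one short verification. First I would note that $F$ is quadratic (every monomial occurring in the two components has algebraic degree $2$) and APN by Theorem~\ref{thm_apn}, and that $F(0,0)=(0,0)$ trivially since neither component has a constant or linear term. Hence, by Theorem~\ref{prop:walsh}, it is enough to show that every element of $\im(F)\setminus\{0\}$ has at least three preimages; the criterion then promotes this to exactly three and delivers the classical Walsh spectrum for free.

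To exhibit the three preimages, I would use the cube roots of unity. Since $m$ is even, $3\mid 2^m-1$ and $\f{4}^\times\subset\M^\times$ is precisely the group of cube roots of unity. The key claim is that
\[
F(\lambda x,\lambda^{2} y)=F(x,y)\qquad\text{for all }x,y\in\M,\ \lambda\in\f{4}^\times .
\]
This is checked by a direct substitution using $\lambda^{3}=1$ together with two congruences modulo $3$: writing $q=2^{k}$ with $k$ odd (recall $\gcd(k,m)=1$ and $m$ even) gives $q\equiv 2$, hence $q+1\equiv 0\pmod 3$, while $r=qQ=2^{k+m/2}$ with $k+m/2$ even (both $k$ and $m/2$ odd) gives $r\equiv 1\pmod 3$. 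Then the first component of $F(\lambda x,\lambda^{2}y)$ equals $\lambda^{q+1}x^{q+1}+B\lambda^{2(q+1)}y^{q+1}=x^{q+1}+By^{q+1}$, and the second equals $\lambda^{r+2}x^{r}y+(a/B)\lambda^{1+2r}xy^{r}=x^{r}y+(a/B)xy^{r}$ because $r+2\equiv 1+2r\equiv 0\pmod 3$. Equivalently, this identity is the assertion that $\diag(m_{\lambda},m_{\lambda^{2}},m_{1},m_{1})$ lies in $C_F\le\Aut_{EL}(F)$, which one can also read off from the description of $C_F$ in Lemma~\ref{lem:centralizers}.

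Finally I would observe that for $(x,y)\ne(0,0)$ the three points $(\lambda x,\lambda^{2}y)$, $\lambda\in\f{4}^\times$, are pairwise distinct (if $x\ne 0$ the first coordinates differ, and if $x=0$ then $y\ne 0$ and the second coordinates differ), so $F$ takes the same value at all three; thus every nonzero value in the image is attained at least three times. Theorem~\ref{prop:walsh} then yields that $F$ is $3$-to-$1$ and has classical Walsh spectrum. I do not expect a genuine obstacle here: the criterion from~\cite{kkk} does the real work, and the only thing demanding attention is the bookkeeping of exponents modulo $3$ in the twisted-invariance identity above.
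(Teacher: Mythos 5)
Your proposal is correct and matches the paper's proof essentially verbatim: both reduce the statement to the criterion of Theorem~\ref{prop:walsh} and verify the invariance $F(\omega x,\omega^2 y)=F(x,y)$ for $\omega\in\F_4^\times$ via the parity of $k$ and $k+m/2$ (equivalently, the exponent arithmetic modulo $3$). Your extra check that the three preimages are pairwise distinct is a detail the paper leaves implicit, but the argument is the same.
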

\begin{proof}
	We check the conditions of Theorem~\ref{prop:walsh}. The first condition is clearly satisfied.
	Recall that
	\[F(x,y)=(x^{q+1}+By^{q+1},x^{qQ}y+(a/B)xy^{qQ}),\]
	so $F(x,y)=F(\omega x, \omega^2 y)$ for any $\omega \in \F_4^\times$ since $qQ=2^{k+m/2}$ where $k+m/2$ is even (recall that $k$ and $m/2$ are both odd), so $\omega^{qQ}=\omega$. Thus both conditions of Theorem~\ref{prop:walsh} hold and $F$ is $3$-to-$1$ with classical Walsh spectrum.
\end{proof}

\end{document}